\newcommand{\fd}{\mathrm{fd}}
\newcommand{\rnk}{\mathrm{rk}}
\newtheorem{thm}{Theorem}[section]
\newtheorem{lem}[thm]{Lemma}
\newtheorem{varcon}[thm]{Conjecture}
\newtheorem{cor}[thm]{Corollary}
\newtheorem{prop}[thm]{Proposition}
\newtheorem{rem}[thm]{Remark}
\newcommand{\ra}{\rightarrow}
\newcommand{\la}{\leftarrow}
\theoremstyle{definition}
\newtheorem{defn}[thm]{Definition}
\newtheorem{ex}[thm]{Example}
\providecommand{\dim}{\mathop{\rm dim}\nolimits}
\providecommand{\rnk}{\mathop{\rm rk}\nolimits}
\providecommand{\id}{\mathop{\rm id}\nolimits}
\providecommand{\End}{\mathop{\rm End}\nolimits}
\providecommand{\im}{\mathop{\rm im}\nolimits}
\providecommand{\ker}{\mathop{\rm ker}\nolimits}
\providecommand{\coker}{\mathop{\rm coker}\nolimits}
\begin{document}

\title{New bounds on the toral rank with application to cohomologically symplectic spaces}
\author{Leopold Zoller}
\maketitle

\begin{abstract}
We use Boij-S\"oderberg theory to give two lower bounds for the dimension of the cohomology of a finite CW-complex in terms of the toral rank and certain Betti numbers of the space. One of our bounds turns out to be particularly effective for c-symplectic spaces, proving the toral rank conjecture for c-symplectic spaces of formal dimension $\leq 8$.
\end{abstract}

\section{Introduction}

Understanding the topological consequences of symmetries on a space is a classical problem in algebraic topology. One of the central conjectures in this area is the toral rank conjecture (TRC),  which has been a driving force in the field for over three decades. It was first posed by Steve Halperin (see \cite{RHATA}) and revolves around the toral rank $\rnk(X)$ of a space $X$, which is the highest integer $r$ such that the $r$-dimensional torus $T^r=(S^1)^r$ acts almost freely (in a continuous fashion) on $X$. By almost freely we mean that all isotropy groups are finite. The TRC states:
\begin{varcon}
Let $X$ be a finite CW-complex. Then
\[\dim_\mathbb{Q} H^*(X;\mathbb{Q})\geq 2^{\rnk(X)}.\]
\end{varcon}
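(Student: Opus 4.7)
The plan is to translate the conjecture into rational homotopy theory and then exploit algebraic structure via Boij--S\"oderberg techniques, as the abstract advertises. By standard results of Allday--Halperin, an almost free $T^r$-action on a finite CW-complex $X$ is equivalent, at the rational level, to the statement that the Borel construction $X_{T^r} = ET^r \times_{T^r} X$ has finite-dimensional rational cohomology. Via Sullivan's minimal model theory, this is encoded by a relative minimal model over $R = H^*(BT^r;\mathbb{Q}) = \mathbb{Q}[x_1,\ldots,x_r]$, and the equivariant cohomology $H^*_{T^r}(X;\mathbb{Q})$ becomes a finitely generated graded $R$-module of finite length. The conjecture thus reduces to a purely algebraic question about the minimum possible dimension of the ordinary cohomology $H^*(X;\mathbb{Q})$ in the presence of such an $R$-module structure.

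First I would set up the Serre spectral sequence of the Borel fibration $X \to X_{T^r} \to BT^r$; this relates $\dim_{\mathbb{Q}} H^*(X;\mathbb{Q})$ to the Betti numbers appearing in a minimal graded free resolution of $H^*_{T^r}(X;\mathbb{Q})$ over $R$, since these Betti numbers compute the relevant Tor groups of the module. The core step would then be to invoke Boij--S\"oderberg theory, decomposing the Betti table of this resolution as a nonnegative $\mathbb{Q}$-linear combination of pure diagrams. The extremal rays of the resulting cone yield linear inequalities among the Betti numbers which, combined with the spectral sequence estimate, bound $\dim_{\mathbb{Q}} H^*(X;\mathbb{Q})$ from below in terms of $\rnk(X)$ and certain Betti numbers of $X$.

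The main obstacle is that inequalities produced by a generic Boij--S\"oderberg decomposition are typically polynomial rather than exponential in $r$, so one cannot immediately reach the full bound $2^r$; this is presumably why the abstract only asserts the conjecture for c-symplectic spaces of formal dimension $\leq 8$. The strategy therefore seems to be to sacrifice full generality and combine the Boij--S\"oderberg output with additional structural hypotheses---Poincar\'e duality and c-symplecticity, together with small formal dimension---so that the extra constraints force the Betti tables into a narrow regime where the exponential bound can be extracted. A complete proof of the TRC in general would likely demand either a much finer analysis of the specific Betti tables that actually arise from almost free torus actions, exploiting the derivation structure of the Sullivan differential, or genuinely new invariants beyond those visible to Boij--S\"oderberg.
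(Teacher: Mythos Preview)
The statement you are addressing is the Toral Rank Conjecture itself, which the paper records as an \emph{open conjecture}; there is no proof of it in the paper to compare against. The paper's contribution is the pair of bounds in Theorem~\ref{C} and their refinements in Theorems~\ref{symplabsch} and~\ref{unteresymplabsch}, which establish the conjecture only for c-symplectic spaces of formal dimension $\leq 8$.

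Your proposal is not a proof of the conjecture and you say so yourself: you correctly identify that Boij--S\"oderberg decompositions of the Betti table of a finite-length module over $R=\mathbb{Q}[x_1,\dots,x_r]$ produce bounds that are at best polynomial in $r$, not the exponential $2^r$. This is exactly the limitation the paper runs into. So what you have written is an accurate high-level sketch of the paper's \emph{strategy for partial results}, not a proof of the stated conjecture; the genuine gap is precisely the one you name in your final paragraph, and it remains open.

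One technical remark on the sketch itself: the paper does not apply Boij--S\"oderberg theory to the full minimal free resolution of $H^*_{T^r}(X)$ as you suggest. Instead it works with the minimal Hirsch--Brown model $(R\otimes H^*(X),\delta)$, isolates a specific map $R^l\to R^k$ (from odd/even pieces onto a submodule on which $\delta$ vanishes), and applies Boij--S\"oderberg only to the cokernel of that single map (Proposition~\ref{abschatzprop}). The formal dimension enters via the Castelnuovo--Mumford regularity of that cokernel, and the first Betti number enters via Lemma~\ref{extalglem}, which extracts an exterior subalgebra from the part of $H^1$ on which the twisted differential is injective. Your description via ``Tor groups of the module'' is morally related but coarser than what the paper actually does.
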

Or in other words, in order for a torus to act almost freely on a finite CW-complex, it is necessary that the dimension of the total cohomology of the space is at least as large as that of the torus. \\
Often $\rnk(X)$ is replaced by $\rnk_0(X)$, which is the maximum of $\rnk(Y)$ among the finite CW-complexes $Y$ in the rational homotopy type of $X$. While both versions of the conjecture are equivalent, $\rnk_0$ has the advantage of being computable by purely algebraic means in the language of rational homotopy theory and commutative differential graded algebras (cdga):

\begin{prop}[{\cite[Prop. 4.2]{LOMM}}]\label{modelprop}
Let $X$ be a path connected, nilpotent finite CW-complex and $(\Lambda V,d)$ its Sullivan minimal model. Then $\rnk_0(X)$ is the maximum integer $r$ such that there exists an extension sequence
\[(\Lambda (x_1,\ldots,x_r),0)\ra (\Lambda (x_1,\ldots,x_r)\otimes \Lambda V,D)\ra (\Lambda V,d)\]
of cdgas such that the middle cdga has finite dimensional cohomology, the $x_i$ are of degree $2$, and the maps are given by inclusion and projection.
\end{prop}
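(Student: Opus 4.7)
The plan is to establish both inequalities between $\rnk_0(X)$ and the largest $r$ for which an algebraic KS-extension of the stated form exists, using the translation between almost free torus actions and Borel fibrations under the Sullivan model functor.

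For the easy direction ($\rnk_0(X)\geq r$ implies existence of the extension), I would start from an almost free $T^r$-action on a finite CW-complex $Y$ in the rational homotopy type of $X$ and consider the associated Borel fibration
\[Y\lra Y_{T^r}:=ET^r\times_{T^r}Y\lra BT^r.\]
Since all isotropy groups of an almost free action are finite, the projection $Y_{T^r}\ra Y/T^r$ is a rational equivalence; as $Y/T^r$ is a finite CW-complex, this forces $H^*(Y_{T^r};\mathbb{Q})$ to be finite dimensional. Applying the minimal Sullivan model functor to the fibration produces a relative minimal model. The base $BT^r=(\mathbb{C}P^\infty)^r$ has minimal model $(\Lambda(x_1,\ldots,x_r),0)$ with $|x_i|=2$, the fiber model is the minimal model $(\Lambda V,d)$ of $Y\simeq_\mathbb{Q} X$, and the KS-extension takes precisely the required form $(\Lambda(x_1,\ldots,x_r),0)\ra(\Lambda(x_1,\ldots,x_r)\otimes\Lambda V,D)\ra(\Lambda V,d)$ with finite dimensional cohomology in the middle.

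For the reverse direction, given such an algebraic extension, one needs to produce an almost free $T^r$-action on some finite CW-complex rationally equivalent to $X$. The approach is to geometrically realize the extension: Sullivan's realization functor turns the KS-extension into a rational fibration $F\ra E\ra BT^r_\mathbb{Q}$ with $F\simeq_\mathbb{Q} X$ and $E$ of finite rational cohomology. Pulling back along the universal principal bundle $ET^r\ra BT^r$ produces a space with a $T^r$-action whose Borel construction recovers (rationally) the model $E$; finite dimensionality of $H^*(E;\mathbb{Q})$ prevents any positive dimensional subtorus from fixing positive dimensional orbits, forcing the action to be almost free. The standard argument (as in work of Allday--Halperin) then allows this rational data to be transferred onto a genuine finite CW-complex in the rational homotopy type of $X$.

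The main obstacle is the reverse direction. The forward implication is essentially formal once one recognizes the Borel fibration and invokes functoriality of Sullivan models. The reverse requires delicate geometric realization: one must pass from a purely algebraic cdga extension to an actual almost free torus action on a finite (not merely rationally finite) CW-complex, which relies crucially on the interplay between the finite dimensionality of $H^*(E;\mathbb{Q})$ and the detection of almost freeness through the Borel construction. This is exactly the point at which the cited reference \cite{LOMM} does the technical work, and I would cite it to close this step rather than reproduce the full realization argument.
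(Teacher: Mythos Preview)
The paper does not supply its own proof of this proposition; it is imported from the literature via the citation \cite[Prop.~4.2]{LOMM} in the proposition header and is used as a black box throughout. There is therefore no argument in the paper to compare your sketch against.

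That said, your outline follows the standard route and is essentially sound. Two small remarks: in the forward direction, rather than arguing that $Y/T^r$ is a finite CW-complex (which is not automatic for an arbitrary continuous torus action on a finite complex), it is cleaner to invoke Hsiang's theorem directly---this is exactly what the paper records as Theorem~2.1---to conclude that $H^*(Y_{T^r};\mathbb{Q})$ is finite dimensional. In the reverse direction your phrasing ``prevents any positive dimensional subtorus from fixing positive dimensional orbits'' is a bit off; what you need is again Hsiang's criterion in the other direction, together with the realization/finiteness argument from \cite{LOMM} that you rightly flag as the real content.
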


Here, $\Lambda V$ (resp.\ $\Lambda(x_i)$) denotes the free commutative graded algebra on a graded vector space $V$ (resp.\ the graded vector space generated by the $x_i$). We obtain a purely algebraic reformulation of the TRC, making it accessible to the tools of homological and commutative algebra.\\
While the TRC is still open in general, it has not been untouched: It was proved for special types of spaces like e.g.\ Hard-Lefschetz manifolds in \cite{BOTTR} and elliptic spaces with pure Sullivan models in \cite{RHATA}, just to name a few. Those results were later generalized to c-symplectic spaces of Lefschetz-type in \cite{CSSTAATGG}, \cite{TGSTAATM} and to ellitptic spaces with a certain type of two-stage minimal model in \cite{FTAATSS}. On the other hand, there has also been success in bounding the dimension of the cohomology in the general case: One of the classical results is given by the inequality
\[\dim H^*(X;\mathbb{Q})\geq\begin{cases} 2\rnk_0(X) &\text{for }\rnk_0(X)\leq 2\\
 2(\rnk_0(X)+1) &\text{for }\rnk_0(X)> 2\end{cases},\]
from \cite{BOTTR}, which holds for any finite CW-complex. This has been improved to
\[\dim H^*(X;\mathbb{Q})\geq 2(\rnk_0(X)+\lfloor \rnk_0(X)/3\rfloor),\]
in \cite{CCOAFTA} which is the best general lower bound known to the author.\\
We shall be concerned with both types of approaches, the specific and the general one. Specifically, we will discuss torus actions on spaces that have the cohomology of a compact symplectic manifold.

\begin{defn}
A space $M$ is called c-symplectic if its cohomology algebra is finite dimensional with maximal degree $2n$, satisfies Poincar\'e duality, and there is an element $\omega\in H^2(M;\mathbb{Q})$ such that $\omega^n$ is nontrivial. It is further said to be of Lefschetz-type if multiplication with $\omega^{n-1}$ induces an isomorphism from $H^1(M;\mathbb{Q})$ to $H^{2n-1}(M;\mathbb{Q})$.
\end{defn}

The most important examples of c-symplectic spaces are of course compact symplectic manifolds. However, the notion of a c-symplectic space is a little more general. For instance certain connected sums of copies of $\mathbb{C}\mathrm{P}^n$ are known to not carry almost complex structures and thus, in particular, can not be symplectic. As our arguments will not make use of any geometric structure, we will work with c-symplectic spaces, having compact symplectic manifolds in mind as the main application. In \cite{CSSTAATGG}, concepts from symplectic geometry are imitated topologically, which leads to quite a deep understanding of actions on c-symplectic spaces. Another reference to mention in this context is \cite{TGSTAATM} (based on unpublished notes from 1978) operating on the elegant level of equivariant cohomology. Both references prove

\begin{thm}\label{A}
Let $M$ be a finite CW-complex that is c-symplectic of Lefschetz-type and has an almost free $T^r$-action. Then $H^*(M;\mathbb{Q})\cong H^*(T^r;\mathbb{Q})\otimes H^*(M/T^r;\mathbb{Q})$ as algebras. In particular, the toral rank conjecture holds for $M$.
\end{thm}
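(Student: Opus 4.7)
The natural approach is via the Serre spectral sequence of the Borel fibration
\[ M \;\hookrightarrow\; M_T := ET^r \times_{T^r} M \;\longrightarrow\; BT^r, \]
whose \(E_2\)-page is \(H^*(BT^r)\otimes H^*(M)\) with \(H^*(BT^r)=\mathbb{Q}[t_1,\ldots,t_r]\) concentrated in even degrees. Almost-freeness of the action gives \(H^*(M_T;\mathbb{Q})\cong H^*(M/T^r;\mathbb{Q})\), which is finite-dimensional; this is the central finiteness input. Equivalently one could work with the Sullivan extension of Proposition~\ref{modelprop}, but the spectral sequence framework is cleaner for obtaining the stated algebra isomorphism.

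A first simple observation: since the base cohomology lives only in even degrees, every differential of order \(\geq 3\) vanishes on a class sitting at \(E_2^{0,2k}\). Thus the only possibly nonzero differential on \(\omega\) is
\[ d_2(\omega)=\sum_{i=1}^r t_i\otimes a_i,\qquad a_i\in H^1(M;\mathbb{Q}), \]
and by Leibniz \(d_2(\omega^n)=n\sum_i t_i\otimes\omega^{n-1}a_i\). Here the Lefschetz-type hypothesis enters decisively: \(\omega^{n-1}\colon H^1(M)\to H^{2n-1}(M)\) is an isomorphism, so the \(\omega^{n-1}a_i\) have the same rank as the \(a_i\), and combined with Poincar\'e duality one obtains a non-degenerate skew pairing \((a,b)\mapsto\langle ab\omega^{n-1},[M]\rangle\) on \(H^1(M)\).

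The crux of the proof is then to show that the transgression \(d_2\colon H^1(M)\to H^2(BT^r)\) is \emph{surjective}, equivalently that each \(t_i\) vanishes in \(H^*(M_T)\cong H^*(M/T^r)\), equivalently that the classifying map \(M/T^r\to BT^r\) of the principal \(T^r\)-bundle \(M\to M/T^r\) induces zero on \(H^2\). This is strictly stronger than mere nilpotency of the \(t_i\) (which is all finite-dimensionality by itself provides), so this is precisely where the Lefschetz-type hypothesis must do real work. I would argue by contradiction: if some \(t_i\) were not in the image of \(d_2\), then using the pairing above to produce a class dual to \(a_i\) and multiplying against a suitable power of \(\omega\), one should obtain via Leibniz a nontrivial permanent cocycle in a total degree exceeding that of \(M/T^r\), contradicting finite-dimensionality. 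Turning this strategy into a clean argument --- carefully tracking lower-filtration contributions and multiplicative extensions in the spectral sequence --- is the main obstacle I anticipate.

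Once surjectivity of the transgression is established, the Serre spectral sequence of the fibration \(T^r\to M\to M/T^r\) collapses at \(E_2\), since its differentials are generated by the transgression of \(H^1(T^r)\), which factors through the now-trivial pullback \(H^2(BT^r)\to H^2(M/T^r)\). Choose lifts \(\tilde{x}_1,\ldots,\tilde{x}_r\in H^1(M)\) of a basis of \(H^1(T^r;\mathbb{Q})\); since these have odd degree they anticommute and square to zero, so Leray--Hirsch applied to the \(T^r\)-fibration yields the algebra isomorphism \(H^*(M;\mathbb{Q})\cong H^*(T^r;\mathbb{Q})\otimes H^*(M/T^r;\mathbb{Q})\). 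The toral rank conjecture for \(M\) follows immediately, since \(\dim H^*(T^r;\mathbb{Q})=2^r\).
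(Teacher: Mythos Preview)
The paper does not give its own proof of Theorem~\ref{A}; it quotes the result from \cite{CSSTAATGG} and \cite{TGSTAATM} and only supplies a proof of the weaker Theorem~\ref{B}. So there is no in-paper argument to compare against beyond the remark, just after the proof of Theorem~\ref{B}, that ``under the Lefschetz-type assumption, the inclusion of an orbit induces a surjection $H^*(M)\to H^*(T^r)$''. Your plan is exactly the standard one behind that remark: show that $d_2\colon H^1(M)\to H^2(BT^r)$ is surjective in the Borel spectral sequence (equivalently, that $H^2(BT^r)\to H^2(M_T)$ vanishes), then collapse the spectral sequence of the rational fibration $T^r\to M\to M_T$ and finish with Leray--Hirsch. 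That outline, and your Leray--Hirsch endgame, are correct; note only that for an almost free action one should work with $M_T$ rather than $M/T^r$, since $M\to M/T^r$ is not a genuine fibration.

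The genuine gap is the key step itself, which you flag but do not close. Your sketch conflates two different linear maps. The classes $a_i$ and the skew pairing you set up come from $d_2(\omega)\in H^2(BT^r)\otimes H^1(M)$, which, as in the Remark after Theorem~\ref{B}, encodes a map $\mathfrak t\to H^1(M)$; what you must control is the \emph{other} map $d_2\vert_{H^1(M)}\colon H^1(M)\to H^2(BT^r)$. Assuming ``$t_i\notin\mathrm{im}\,d_2$'' and then reaching for ``a class dual to $a_i$'' does not connect these: nothing you have written links a failure of surjectivity of the second map to any distinguished behaviour of the particular class $a_i$. The arguments in the cited references do not proceed this way; they first use the subtorus trick (as in the proof of Theorem~\ref{B} and Lemma~\ref{torlem}) to reduce to a situation where the restricted $d_2$ on $H^1$ vanishes, and then exploit the Lefschetz isomorphism $\omega^{n-1}\colon H^1\to H^{2n-1}$ together with Poincar\'e duality in a multiplicative spectral-sequence computation to force a contradiction with $\fd(M_T)<2n$. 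Your intuition that Lefschetz plus the pairing is what does the work is right, but the mechanism you describe is not the one that actually runs, and as written the argument does not go through.
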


Without the Lefschetz-type assumption, a weaker version of the above result still holds in the form of the theorem below from \cite{CSSTAATGG}. While not stated explicitly, the core idea (namely the case of a circle-action) is also given in \cite{TGSTAATM}.

\begin{thm}\label{B}
Let $M$ be a c-symplectic finite CW-complex. Then
\[\rnk_0(M)\leq b_1,\]
where $b_1$ is the first Betti number of $M$.
\end{thm}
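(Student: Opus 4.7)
The plan is to use Proposition~\ref{modelprop} to encode the condition $\rnk_0(M)\geq r$ by an extension $(\Lambda(x_1,\ldots,x_r),0)\to(A,D)\to(\Lambda V,d)$ of cdgas with $A=\Lambda(x_1,\ldots,x_r)\otimes\Lambda V$ having finite-dimensional cohomology, and to extract $r$ linearly independent classes in $H^1(M)$ from the c-symplectic class. Denote the formal dimension of $M$ by $2n$ and pick a cocycle representative $\hat\omega\in(\Lambda V)^2$ of $\omega$. For degree reasons we must have $D\hat\omega=\sum_{i=1}^r x_i\alpha_i$ with $\alpha_i\in V^1$ (no $x_ix_j$-term is possible, as it would require an element of negative degree), and $D^2\hat\omega=0$ forces $d\alpha_i=0$, so each $\alpha_i$ defines a cohomology class $[\alpha_i]\in H^1(M)$. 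The strategy is to show that these $r$ classes are linearly independent, whence $b_1\geq r$.

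Suppose for contradiction that $\sum c_i[\alpha_i]=0$ in $H^1(M)$ for some nonzero $(c_1,\ldots,c_r)$; since there are no coboundaries in degree one, this already means $\sum c_i\alpha_i=0$ as an element of $V^1$. I would then form $A':=\Lambda(y)\otimes\Lambda V$ equipped with the unique differential $D'$ making the substitution $\phi\colon A\to A'$, $x_i\mapsto c_iy$, $v\mapsto v$ into a cdga map (that is, $D'v:=\phi(Dv)$). Then $D'\hat\omega=\phi(D\hat\omega)=y\sum c_i\alpha_i=0$, so $\hat\omega^n$ is a $D'$-cocycle, and the projection $A'\to\Lambda V$ at $y=0$ sends it to $\omega^n\neq 0$, yielding $[\hat\omega^n]\neq 0$ in $H^{2n}(A')$.

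It remains to contradict this by showing $H^{2n}(A')=0$. First, $H^*(A')$ is finite-dimensional: after relabeling so that $c_1\neq 0$, the $D$-cocycles $\beta_i:=x_i-(c_i/c_1)x_1$ for $i\geq 2$ generate $\ker\phi$, and quotienting by them one at a time yields, via the standard long exact sequences (each $\beta_i$ acts injectively since $A$ is free over $\mathbb{Q}[x_1,\ldots,x_r]$), cohomologies bounded at each step, hence finite-dimensional in total. Second, the short exact sequence $0\to yA'\to A'\to\Lambda V\to 0$ combined with $yA'\cong A'[-2]$ and $H^{>2n}(\Lambda V)=0$ yields via its long exact sequence that multiplication by $y$ induces isomorphisms $H^k(A')\xrightarrow{\cong}H^{k+2}(A')$ for all $k\geq 2n$; finite-dimensionality then forces $H^{\geq 2n}(A')=0$, contradicting $[\hat\omega^n]\neq 0$. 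The main obstacle I anticipate is establishing the finiteness of $H^*(A')$, which is the algebraic counterpart of the geometric fact that a subcircle of an almost-freely acting torus acts almost freely; the rest of the argument is a direct exploitation of the c-symplectic structure and the vanishing $H^{>2n}(\Lambda V)=0$ coming from Poincar\'e duality.
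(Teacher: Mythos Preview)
Your argument is correct and is essentially the minimal-model translation of the paper's proof. Where the paper works with the Serre spectral sequence of the Borel fibration and restricts to a geometric subcircle via Lemma~\ref{torlem}, you work directly with the relative Sullivan model furnished by Proposition~\ref{modelprop} and pass to the algebraic quotient $A'=A/(\beta_2,\ldots,\beta_r)$, which models precisely that restriction. In both cases the heart of the matter is that $D\hat\omega$ (your $\sum x_i\alpha_i$) respectively $d_2\omega$ (the paper's $\sum a_i\otimes p_i$) encodes a linear map $\mathfrak t\to H^1(M)$ that must be injective for the action to be almost free; the paper makes this explicit in the remark following its proof, and your contradiction via $[\hat\omega^n]\neq 0$ versus $H^{2n}(A')=0$ is exactly the algebraic form of the paper's contradiction via $\omega^n$ surviving versus $\fd(M_{S^1})<2n$. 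Your iterated long-exact-sequence argument for the finiteness of $H^*(A')$ is the algebraic substitute for the topological fact that a subcircle of an almost-freely acting torus still acts almost freely, which the paper gets for free by working with actual actions. Neither approach is materially simpler; yours has the small advantage of remaining entirely inside the algebraic framework of Proposition~\ref{modelprop}, while the paper's avoids the need to verify that passage to the quotient preserves finite-dimensional cohomology.
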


Though we can not prove the TRC for c-symplectic spaces that are not of Lefschetz-type, Theorem \ref{B} motivates us to still give an improved lower bound for the dimension of their cohomologies. This is achieved by proving a general lower bound for arbitrary spaces that takes into account the first Betti number and the formal dimension, where the formal dimension $\fd(X)$ of a space $X$ with finite-dimensional cohomology is defined to be the highest degree $n$ such that $H^n(X;\mathbb{Q})\neq 0$. The underlying idea is to investigate what happens when not only the degree $0$ cohomology, but multiple generators in the minimal Hirsch-Brown model of an action get mapped to $0$. With the help of the theory surrounding the rather recently solved conjectures of Boij and S\"oderberg on free resolutions of graded modules, we prove:

\begin{thm}\label{C}
Let $X$ be a finite CW-complex of formal dimension $n$ with an almost free $T^r$-action.
\begin{enumerate}[(i)]
\item Let $b$ be the first Betti number of $X$. Then
\[\dim H^*(X)\geq \min_{k=0,\ldots,b} \frac{n+r-1}{n-r+1}2k+2^{b-k}.\]
\item Let $k$ be the degree corresponding to the first nontrivial odd Betti number. Then
\[\dim H^*(X)\geq \frac{n+r-1}{n-r+1}2\dim H^{<k}(X).\]
\end{enumerate}

\end{thm}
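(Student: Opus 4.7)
The plan is to analyze the minimal Hirsch--Brown model $(R\otimes H^*(X;\mathbb{Q}),D)$ of the action, where $R=\mathbb{Q}[x_1,\dots,x_r]$ with $|x_i|=2$. Its cohomology $H^*_{T^r}(X;\mathbb{Q})$ is finite-dimensional over $\mathbb{Q}$ by almost-freeness and concentrated in cohomological degrees $\leq n-r$; as a graded $R$-module it has projective dimension $r$ by Auslander--Buchsbaum. This places the minimal free resolution of $H^*_{T^r}(X)$ squarely in the domain of Boij--S\"oderberg theory, with the socle degree bound $n-r$ constraining the admissible degree sequences of pure diagrams.

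The technical core should be a Boij--S\"oderberg inequality relating $\dim_\mathbb{Q} H^*(X)$ to a count of ``forced generators'' in low degree of $H^*_{T^r}(X)$:
\[ \dim_\mathbb{Q} H^*(X) \;\geq\; \tfrac{n+r-1}{n-r+1}\cdot 2\cdot s, \]
where $s$ is the number of such generators. The ratio $(n+r-1)/(n-r+1)$ should arise from the Herzog--K\"uhl equations applied to the extremal pure diagram with degree sequence $(0,2,4,\dots,2(r-1),\,n+r-1)$, which maximally stretches the last syzygy under the regularity constraint; since any Betti table is a positive rational combination of pure ones, verifying the inequality on pure diagrams suffices. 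Translating Betti numbers of $H^*_{T^r}(X)$ into cohomology dimensions of $X$ then proceeds via direct analysis of the Hirsch--Brown complex, matching each generator or syzygy to associated basis elements of $H^*(X)$.

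Granted this estimate, part (ii) is essentially immediate. The hypothesis on the first nontrivial odd Betti number forces the linear part $D_1$ of the Hirsch--Brown differential to vanish on $H^{<k}(X)$, since its target $W\otimes H^{<k-1}(X)$ lies in odd degrees below $k$ which are zero. Hence every class of $H^{<k}(X)$ survives as a forced generator of $H^*_{T^r}(X)$, giving $s\geq\dim H^{<k}(X)$ and the claimed bound. For part (i), let $k$ denote the rank of $\partial_1:=D_1|_{H^1(X)}:H^1(X)\to W$. The $k$ classes complementary to $\ker\partial_1$ yield, via the Boij--S\"oderberg estimate, a contribution of $\tfrac{n+r-1}{n-r+1}\cdot 2k$ (as killed degree-$1$ classes drag along with them forced cohomology elsewhere). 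The $b-k$ classes in $\ker\partial_1$ contribute a factor $2^{b-k}$, obtained by recognising them as generating an exterior subalgebra (in sufficiently formal situations) or, more generally, through an iterated Gysin-sequence argument for the free circle actions they induce. Adding these two contributions, supported on transverse subspaces of $H^*(X)$, and minimising over $k$ yields the stated bound.

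The main obstacle will be the Boij--S\"oderberg step itself: one must identify the extremal pure diagram maximising the linear functional ``dim $H^*(X)$ per forced generator'' subject to the constraints imposed by $n$ and $r$, and verify the optimum equals $\tfrac{n+r-1}{n-r+1}\cdot 2$. A secondary issue in (i) is making the $2^{b-k}$ contribution from surviving degree-$1$ classes rigorous in the general nilpotent setting, where the naive embedding $\Lambda H^1(X)\hookrightarrow H^*(X)$ may fail; some inductive reduction (perhaps via Theorem \ref{B}) will likely replace it.
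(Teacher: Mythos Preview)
Your outline for part (ii) is essentially the paper's argument: the entire differential (not just its linear part) vanishes on $R\otimes H^{<k}(X)$ for parity reasons, so projecting $\delta$ onto this submodule yields a map $R\otimes H^{odd}(X)\to R\otimes H^{<k}(X)$ with image in $I\otimes H^{<k}(X)$ and finite-dimensional cokernel; Proposition \ref{abschatzprop} then gives the bound.

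For part (i), however, you have the two contributions exactly reversed, and this is not merely a relabelling issue. In the paper one decomposes $\ker(d|_{V^1})=Z\oplus Z'$ with $Z'=\ker(D|_{V^1})$ and sets $k=\dim Z'$. The exterior algebra of dimension $2^{b-k}$ is generated by $Z$, the classes on which $D$ is \emph{injective}, not by the kernel $Z'$. This is the content of Lemma \ref{extalglem}: injectivity of $D$ on $Z$ forces $\im(d)\cap \Lambda Z=0$ in $\Lambda V$, so $\Lambda Z\hookrightarrow H^*(X)$. Your intuition that the ``surviving'' kernel classes should span an exterior algebra is the natural guess, but it is false in general (and you rightly flag this as a worry); the correct source of the exterior subalgebra is the transgressing classes, and the required lemma is not obvious.

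Conversely, the Boij--S\"oderberg contribution comes from the kernel $Z'$, not from the transgressing classes. Since $\delta$ vanishes on $R\otimes\overline{Z'}$ and preserves $R\otimes\overline{\Lambda Z}$, projecting $\delta$ to $R\otimes\overline{Z'}$ gives a map
\[
R\otimes (H^*(X)/\overline{\Lambda Z})^{even}\longrightarrow R\otimes\overline{Z'}
\]
with image in $I\otimes\overline{Z'}$ and finite cokernel of top degree $\leq\frac{n-r-1}{2}$; Proposition \ref{abschatzprop} then yields $\dim(H^*(X)/\overline{\Lambda Z})^{even}\geq\frac{n+r-1}{n-r+1}k$. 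Your proposed mechanism (``killed degree-$1$ classes drag along forced cohomology elsewhere'') does not produce a submodule that must be hit, so it is not clear how you would extract a $\beta_1/\beta_0$-type inequality from those classes. Note also that the extremal pure diagram is of type $(0,1,N+2,\ldots,N+r)$ in the degree-$1$ convention, controlling $\beta_1/\beta_0$; your sequence $(0,2,\ldots,2(r-1),n+r-1)$ is not the relevant one.
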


While this is, of course, not a strict improvement of the general lower bound due to the dependence on certain Betti numbers, it still ameliorates the known bounds in many cases. In particular, together with Theorem \ref{B} we get an improved lower bound for c-symplectic spaces, that approaches a quadratic bound for smaller cohomogenities without needing additional assumptions on Betti numbers. This will prove the TRC for c-symplectic spaces of formal dimension $\leq 8$.\\
The material is organised as follows: We begin with a short recap of basic concepts and facts from equivariant cohomology that we will need later. As all necessary tools are at hand, we show how Theorem \ref{B} can be deduced in the style of \cite{TGSTAATM}. The next section develops the necessary algebraic tools for the proof of Theorem \ref{C}. In particular, a very brief introduction to Boij-S\"oderberg theory is given. In section 4, we prove Theorem \ref{C} and give some numbers concerning the effectiveness of our bounds. The final section 5 is dedicated to the implications of Theorems \ref{B} and \ref{C} for c-symplectic spaces. We put some effort into pushing the numbers beyond the immediate corollary to obtain the TRC in dimension $\leq 8$.\\
In what follows, all coefficients will be in $\mathbb{Q}$. When speaking of the $r$-torus $T^r$, we will always assume $r\geq 1$. All spaces are assumed to be path connected.\\
The author wants to thank Christopher Allday for his comments on an earlier version of this paper.

\section{Equivariant cohomology and c-symplectic spaces}

Our essential tool for extracting topological information from a group action is the Borel fibration. For any Lie-group $G$ acting on a space $X$, we can consider the universal $G$-bundle $G\ra EG\ra BG$ and the associated principal bundle \[X\ra (EG\times X)/G\ra BG.\]
The latter is called the Borel fibration of the action and its total space ($EG\times X)/G$ is denoted by $X_G$. The homotopy type of $X_G$ encodes information on the $G$-action on $X$. The following theorem of Hsiang is of fundamental importance to us:

\begin{thm}[\cite{CTOTTG}]
Let $G$ be a compact Lie-group acting on a finite CW-complex $X$. Then the action is almost free if and only if $H^*(X_G)$ is finite dimensional.
\end{thm}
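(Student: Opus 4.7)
The plan is to prove the two directions separately: one follows from a direct spectral sequence argument on the orbit map, and the other reduces to the case of a circle subgroup and invokes the Borel localization theorem.

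For the direction that almost freeness implies $H^*(X_G)$ finite-dimensional, assume every isotropy group $G_x$ is finite. The orbit map $X_G = (EG\times X)/G \to X/G$ has fiber $EG/G_x = BG_x$ over the orbit of $x$, and since $G_x$ is finite, $H^*(BG_x;\mathbb{Q})=\mathbb{Q}$. A Leray spectral sequence argument — carried out along the stratification of $X/G$ by orbit type, on each stratum of which the orbit map is a genuine fiber bundle — then identifies $H^*(X_G) \cong H^*(X/G)$. The orbit space of a compact Lie group action on a finite CW complex is itself a finite CW complex, so its rational cohomology is finite-dimensional.

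For the converse, suppose the action is not almost free. Then some isotropy $G_x$ is positive-dimensional and hence contains a subcircle $S \cong S^1$, and $x \in X^S$ witnesses $X^S \neq \emptyset$. I would then compare the $G$- and $S$-equivariant theories: since $EG$ is $G$-free it is in particular $S$-free and can serve as a model for $ES$, so the natural quotient map $X_S = (EG\times X)/S \to (EG\times X)/G = X_G$ is a fiber bundle with fiber $G/S$. If $H^*(X_G)$ were finite-dimensional, then since $G/S$ is a compact manifold with finite-dimensional cohomology, the Serre spectral sequence $E_2 \Rightarrow H^*(X_S)$ would force $H^*(X_S)$ finite-dimensional as well.

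To reach a contradiction I would apply Borel localization for the circle action of $S$ on $X$: the module $H^*(X_S)$ over $H^*(BS) = \mathbb{Q}[t]$ is finitely generated, and inverting $t$ identifies it with the analogous localization of $H^*((X^S)_S) = H^*(X^S)\otimes\mathbb{Q}[t]$, which is free of positive rank because $X^S$ is nonempty. Hence $H^*(X_S)$ contains a free $\mathbb{Q}[t]$-summand and is infinite-dimensional over $\mathbb{Q}$, the desired contradiction. The main technical point is the identification of $X_S \to X_G$ as a fiber bundle with compact fiber $G/S$ (with some care for local coefficients in the associated Serre spectral sequence) so that finite-dimensionality can be transported between the two Borel constructions; the remaining ingredients — orbit-type stratifications, the Serre spectral sequence, and Borel localization — are standard tools in equivariant rational cohomology.
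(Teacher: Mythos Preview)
The paper does not give its own proof of this theorem: it is quoted as a result of Hsiang, with only the citation \cite{CTOTTG} and no argument. There is therefore no ``paper's proof'' to compare your proposal against.

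Your sketch follows the standard line of argument and is essentially sound, but two points deserve care. In the almost-free direction, the assertion that $X/G$ is a finite CW-complex is not generally correct for an arbitrary continuous action of a compact Lie group on a finite CW-complex; what one actually uses is that $X/G$ is compact of finite cohomological dimension (so its \v{C}ech cohomology is finite-dimensional), together with the Vietoris--Begle/Leray argument that $X_G\to X/G$ is a rational cohomology equivalence when all fibres $BG_x$ are rationally acyclic. The orbit-type stratification as a system of genuine fibre bundles is the picture for smooth actions; in the topological setting one works instead with the Leray sheaf, whose stalks are $H^*(BG_x;\mathbb{Q})=\mathbb{Q}$.

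In the converse direction your reduction to a circle subgroup and use of Borel localization is the right idea; note only that to conclude finiteness of $H^*(X_S)$ from finiteness of $H^*(X_G)$ via the Serre spectral sequence of $G/S\to X_S\to X_G$ one should remark that the local coefficient system is finite-dimensional in each degree (or, more simply, bound $\dim H^*(X_S)$ by $\dim H^*(G/S)\cdot\dim H^*(X_G)$ regardless of twisting). With these adjustments your argument reproduces the classical proof.
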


This result can be refined a little more with respect to the notion of formal dimension as defined in the introduction.

\begin{lem}\label{fdlem}
Let $X$ be a finite CW-complex with an almost free action of a compact Lie-group $G$. Then $\fd(X_G)=\fd(X)-\fd(G)$.
\end{lem}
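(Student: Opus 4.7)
The plan is to extract top-degree information from the Serre spectral sequence of the principal $G$-bundle
\[G \lra EG \times X \lra X_G,\]
whose total space is homotopy equivalent to $X$ (since $EG$ is contractible) and whose base has finite-dimensional cohomology by Hsiang's theorem recalled just above, so that $\fd(X_G)$ is well-defined. This yields a spectral sequence with $E_2^{p,q} = H^p(X_G;\mathcal{H}^q(G))$ converging to $H^{p+q}(X)$. The vanishing of the $E_2$-page for $p>\fd(X_G)$ or $q>\fd(G)$ immediately gives the easy inequality
\[\fd(X)\leq \fd(X_G)+\fd(G).\]

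For the reverse inequality, I would first treat the case in which $G$ is connected, so that the monodromy is trivial (a connected Lie group acts trivially on its own rational cohomology) and $E_2^{p,q}=H^p(X_G)\otimes H^q(G)$. Since $H^*(G;\mathbb{Q})$ is an exterior algebra on odd generators, its top cohomology is concentrated in the single degree $\fd(G)$. The bidegree $(\fd(X_G),\fd(G))$ then sits in the upper-right corner of the spectral sequence: an outgoing differential $d_r$ lands in column $\fd(X_G)+r$ where the page vanishes, and an incoming differential originates in row $\fd(G)+r-1$ where the page likewise vanishes. Hence
\[E_\infty^{\fd(X_G),\fd(G)}=E_2^{\fd(X_G),\fd(G)}=H^{\fd(X_G)}(X_G)\otimes H^{\fd(G)}(G)\neq 0,\]
exhibiting a nonzero class in $H^{\fd(X_G)+\fd(G)}(X)$ and proving $\fd(X)\geq\fd(X_G)+\fd(G)$.

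The main obstacle is to handle the case of a disconnected $G$. Here the natural reduction is to pass to the identity component $G_0\subset G$: one has $\fd(G_0)=\fd(G)$, and $X_{G_0}\to X_G$ is a finite covering with deck group $\pi_0(G)$, giving $H^*(X_G;\mathbb{Q})=H^*(X_{G_0};\mathbb{Q})^{\pi_0(G)}$. The task then becomes to verify that $\pi_0(G)$ admits a nonzero fixed vector in the top cohomology $H^{\fd(X_{G_0})}(X_{G_0};\mathbb{Q})$; alternatively, one can retain twisted coefficients throughout and show that the rank-one local system $\mathcal{H}^{\fd(G)}(G)$ on $X_G$ has nonvanishing cohomology in degree $\fd(X_G)$. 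Either route requires a short extra argument, but the heart of the lemma is the corner argument in the connected case.
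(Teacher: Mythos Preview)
Your argument is essentially the paper's own: the paper phrases the fibration as $\Omega BG \to F_p \to X_G$, obtained by pulling back the pathspace fibration over $BG$ along $X_G \to BG$, but this is the same fibration as your principal bundle $G \to EG \times X \to X_G$ (via the weak equivalence $\Omega BG \simeq G$), and the corner argument is identical. For trivial monodromy, the paper argues that the fibration is pulled back from a simply connected base; since $\pi_1(BG)\cong\pi_0(G)$, this is exactly your hypothesis that $G$ be connected, just phrased differently.

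Your caution about the disconnected case is well placed --- in fact the lemma is \emph{false} in that generality. Take $G=\mathbb{Z}/2$ acting antipodally on $X=S^2$: the action is free, $X_G\simeq\mathbb{R}P^2$ has $\fd(X_G)=0$ over $\mathbb{Q}$, while $\fd(X)-\fd(G)=2$. So the ``short extra argument'' you anticipate cannot exist; the top cohomology of $X_{G_0}$ need not carry a nonzero $\pi_0(G)$-invariant. The paper's proof has the same gap (the base $BG$ is not simply connected when $G$ is disconnected), but only the connected case $G=T^r$ is ever used, so nothing downstream is affected.
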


\begin{proof}
We see this by taking a step back in the Puppe-sequence: Consider the pullback of the pathspace fibration over $BG$ along the Borel fibration $p\colon X_G\ra BG$. Its total space is the homotopy fiber $F_p$ of $p$ and thus homotopy equivalent to $X$. Furthermore, we have a fibration
\[\Omega BG\ra F_p\ra X_G\]
where the fundamental group of $X_G$ acts trivially on the homology of the fibers because the fibration is the pullback of a fibration over a simply connected base. In the associated Serre spectral sequence, the second page equals $H^*(\Omega BG)\otimes H^*(X_G)\cong H^*(G)\otimes H^*(X_G)$. As both factors are finite dimensional by assumption, the entry that is the tensor product of both top degree cohomologies lives to infinity. Hence, $H^{*}(X)$ is non-trivial in degree $\fd(X_G)+\fd(G)$.
\end{proof}

\begin{lem}\label{torlem}
Let $V\subset H^2(BT^r)$ be a $k$-dimensional subspace. Then there exists a $(r-k)$-dimensional subtorus $T^{r-k}\subset T^r$ such that the kernel of the induced  map $H^2(BT^r)\ra H^2(BT^{r-k})$ is precisely $V$.
\end{lem}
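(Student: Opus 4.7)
The plan is to identify $H^2(BT^r;\mathbb{Q})$ with the rationalized character group $\mathrm{Hom}(T^r,S^1)\otimes\mathbb{Q}$, so that the geometric data (subtori) and the algebraic data (subspaces) sit inside the duality between characters and cocharacters of $T^r$. Under this identification, a subtorus $T'\subset T^r$ of dimension $r-k$ contributes the kernel of the restriction map $\mathrm{Hom}(T^r,S^1)\to\mathrm{Hom}(T',S^1)$, whose rationalization is exactly a $k$-dimensional subspace of $H^2(BT^r;\mathbb{Q})$. The goal is to realize the given $V$ this way.

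First I would pick an integral basis of $V$. Since $V$ is a $\mathbb{Q}$-subspace of the rationalized character lattice, after clearing denominators we obtain characters $\chi_1,\ldots,\chi_k\in\mathrm{Hom}(T^r,S^1)$ whose images in $H^2(BT^r;\mathbb{Q})$ span $V$. These assemble into a homomorphism
\[\chi=(\chi_1,\ldots,\chi_k)\colon T^r\lra T^k,\]
whose image is a closed subgroup of $T^k$ of dimension equal to the $\mathbb{Q}$-rank of $\chi_1,\ldots,\chi_k$, namely $\dim V=k$. Consequently the identity component $T^{r-k}:=(\ker\chi)_0$ is a subtorus of $T^r$ of dimension $r-k$.

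Next I would verify the kernel statement. By construction each $\chi_i$ vanishes on $T^{r-k}$, so $V$ is contained in the kernel of the restriction map $H^2(BT^r)\to H^2(BT^{r-k})$. For the reverse inclusion I would invoke that every subtorus of $T^r$ splits off, i.e.\ $T^r\cong T^{r-k}\times T^k$ via a complementary subtorus; this is a direct consequence of the primitivity of the cocharacter sublattice defining an identity-component-of-kernel subtorus. Such a splitting turns the Borel construction into a product $BT^{r-k}\times BT^k$ and makes the restriction $H^2(BT^r)\to H^2(BT^{r-k})$ the projection away from the $H^2(BT^k)$ factor; in particular it is surjective with kernel of dimension exactly $k$. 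Since $V$ is $k$-dimensional and contained in a $k$-dimensional kernel, the two coincide.

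I expect the only point requiring care to be the splitting claim, i.e.\ that the short exact sequence of tori $1\to T^{r-k}\to T^r\to T^r/T^{r-k}\to 1$ splits; equivalently, that $T^{r-k}$ is cut out by a primitive sublattice of characters. This follows because $T^{r-k}$ was constructed as the identity component of a kernel, whence its cocharacter lattice equals the saturation of its image in $\mathbb{Z}^r$, and any such saturated sublattice admits an integral complement. Once this structural point is in place, the kernel description is a direct dimension count, completing the argument.
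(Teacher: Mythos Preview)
Your argument is correct and follows a route that differs in packaging from the paper's. The paper transports the problem to $H^1$ via the transgression isomorphism $d_2\colon H^1(T^r)\to H^2(BT^r)$ in the Serre spectral sequence of the universal bundle, asserts (without further detail) the existence of a subtorus $T^{r-k}$ with $\ker\bigl(H^1(T^r)\to H^1(T^{r-k})\bigr)=d_2^{-1}(V)$, and then concludes by naturality of the transgression for the morphism of universal bundles. You work directly with the identification of $H^2(BT^r;\mathbb{Q})$ with the rationalized character lattice, build the subtorus explicitly as the identity component of $\ker(\chi_1,\ldots,\chi_k)$, and finish by the splitting of subtori to get the dimension count on the kernel. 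The underlying lattice-theoretic content is the same in both proofs; your version spells out the construction the paper leaves implicit and avoids spectral sequences, at the cost of invoking the structural fact that subtori split off, while the paper's version hides that step behind naturality of the transgression.
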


\begin{proof}
Consider the universal $T^r$-bundle $T^r\ra ET^r\ra BT^r$. As $ET^r$ is contractible, the transgression $d_2\colon H^1(T^r)\ra H^2(BT^r)$ on the second page of the associated Serre spectral sequence is an isomorphism. There is a $(r-k)$-dimensional subtorus $T^{r-k}\subset T^r$ such that the kernel of the map $H^1(T^r)\ra H^1(T^{r-k})$ induced by the inclusion is exactly $d_2^{-1}(V)$. We have a morphism of the two corresponding universal bundles

\[\xymatrix{T^{r-k}\ar[d]\ar[r]&ET^r\ar[d]\ar[r]&ET^r/T^{r-k}\ar[d]\\
T^r\ar[r]&ET^r\ar[r]&ET^r/T^r}\]
which induces a morphism of the associated spectral sequences. Since both transgressions on the second page are isomorphisms, we see that the map induced by $BT^{r-k}=ET^r/T^{r-k}\ra ET^r/T^r=BT^r$ on cohomology has the desired properties.
\end{proof}

Before we come to the proof of Theorem \ref{B}, we want to point out that the proof works by reduction to the case of a circle-action. The latter has been treated before in a refined way in \cite[Prop. 1.6.3]{TGSTAATM} using the same tools as we do below.

\begin{proof}[Proof of Theorem \ref{B}]
Assume there is a $T^r$-action on $M$ where $r>b_1$. Let $a_1,\ldots,a_{b_1}$ be a basis of $H^1(M)$, denote by $d_2$ the differential on the second page of the Serre spectral sequence of the Borel fibration associated to the action and by $\omega$ the symplectic class in $H^2(M)$. We have
\[d_2(\omega)=\sum_{i=1}^{b_1}a_i\otimes p_i\]
for certain $p_1,\ldots,p_{b_1}\in H^2(BT^r)$. By Lemma \ref{torlem}, $r>b_1$ implies the existence of a sub-circle $S^1\subset T^r$ such that the $p_i$ lie in the kernel of the map $H^*(BT^r)\ra H^*(BS^1)$. The morphism
\[\xymatrix{M\ar[r]\ar[d]^{\id} & M_{T^r}\ar[r]\ar[d]& BT^r\ar[d]\\
M\ar[r]&M_{S^1}\ar[r]&BS^1}\]
of the Borel fibrations of the $T^r$- and the restricted $S^1$-actions induces a morphism of the associated spectral sequences. Therefore, by construction, the second differential of the spectral sequence associated to the lower row vanishes on $\omega$. Observe that the same holds for all subsequent differentials due to degree reasons. This implies that $\omega^n$ lives to infinity, where $2n$ is the formal dimension of $M$. But this contradicts the fact that the formal dimension of $M_{S^1}$ is less than $2n$.
\end{proof}

\begin{rem}
To put the above proof into a geometric context, note that we essentially show that, in case of an almost free action, the element $d_2(\omega)\in H^2(BT^r)\otimes H^1(M)\cong \mathfrak{t}^*\otimes H^1(M)\cong \hom(\mathfrak{t},H^1(M))$ has to be injective, where $\mathfrak{t}$ denotes the Lie-algebra of $T^r$. Compare this to the situation of a symplectic action on a smooth symplectic manifold $(M,\omega)$: Any element in the kernel of the homomorphism $\mathfrak{t}\ra H^1(M)$ sending $X$ to the contraction of $\omega$ along the fundamental vector field of $X$ generates a subgroup of $T^r$ that acts in a hamiltonian fashion and thus has fixed points, preventing the action to be almost free. The connection between the two homomorphisms can be made explicit using the Cartan-model.
\end{rem}

\begin{cor}\label{kleineabsch}
Let $M$ be a c-symplectic finite CW-complex with $\fd(M)=2n\geq 4$. Then
\[\dim H^*(M)\geq 4\rnk_0(M).\]
\end{cor}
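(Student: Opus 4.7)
The plan is to combine Theorem \ref{B} with Poincar\'e duality and the fact that an almost free torus action forces the rational Euler characteristic to vanish. First I would pass to a representative in the rational homotopy type of $M$ that realizes $\rnk_0(M)$, so that $M$ itself may be assumed to carry an almost free $T^r$-action with $r=\rnk_0(M)$; all ingredients in the statement ($\dim H^*(M)$, $\fd(M)$, being c-symplectic, and Poincar\'e duality) depend only on the rational cohomology algebra, so this replacement is harmless. The case $r=0$ is trivial, so assume $r\geq 1$.

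The key observation is that since the action is almost free, no point can be fixed by the entire positive-dimensional torus $T^r$ (the isotropy of such a point would contain $T^r$, contradicting finiteness), so $M^{T^r}=\emptyset$. The classical torus-localization identity $\chi(M)=\chi(M^{T^r})$ for torus actions on finite CW-complexes then yields $\chi(M)=0$, and consequently $\sum_i b_{2i}(M)=\sum_i b_{2i+1}(M)$. Therefore
\[\dim H^*(M)=2\sum_i b_{2i+1}(M).\]

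Finally, Theorem \ref{B} gives $b_1(M)\geq r$, and since $\fd(M)=2n\geq 4$ the degrees $1$ and $2n-1$ are distinct, so Poincar\'e duality provides $b_{2n-1}(M)=b_1(M)\geq r$. Hence $\sum_i b_{2i+1}(M)\geq b_1(M)+b_{2n-1}(M)\geq 2r$, which combined with the previous equality yields $\dim H^*(M)\geq 4r=4\rnk_0(M)$. The only ingredient that is not entirely elementary is the torus-fixed-point Euler characteristic identity, but this is a standard result in transformation group theory, so no serious obstacle arises.
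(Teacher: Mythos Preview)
Your proof is correct and follows essentially the same route as the paper: Theorem \ref{B} gives $b_1\geq \rnk_0(M)$, Poincar\'e duality (using $2n\geq 4$ so that $1\neq 2n-1$) gives $b_{2n-1}\geq \rnk_0(M)$, and vanishing of the Euler characteristic doubles the odd part to yield $\dim H^*(M)\geq 4\rnk_0(M)$. The only difference is cosmetic: you spell out the passage to a rational representative and the fixed-point justification for $\chi(M)=0$, whereas the paper simply invokes $\chi(M)=0$ for $\rnk_0(M)\geq 1$ as a known fact.
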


\begin{proof}
By Theorem \ref{B}, we have $\dim H^1(M)\geq \rnk_0(M)$. As $H^*(M)$ fulfils Poincar\'e duality, the same is true for $\dim H^{2n-1}(M)$. If $\rnk_0(M)\geq 1$, then the Euler characteristic of $M$ is equal to $0$. Both $1$ and $2n-1$ are odd so the corollary follows.
\end{proof}

With regards to the toral rank conjecture one might hope that the statement of Theorem \ref{B} can be improved in the spirit of Theorem \ref{A} where we find linearly independet elements in $H^1(M)$ that span an exterior algebra. In fact it can be shown that, under the Lefschetz-type assumption, the inclusion of an orbit induces a surjection $H^*(M)\ra H^*(T^r)$. To see that the latter fails in the more general case one needs to look no further than the standard $T^2$-action on the Kodaira-Thurston manifold. The example below makes it clear that we can not hope to directly obtain an improved lower bound on $\dim H^*(M)$ by using the algebra structure of $H^*(M)$.

\begin{ex} Consider the cdga $(\Lambda(a_1,a_2,a_3,b_1,b_2,b_3),d)$ where all generators are of degree $1$ and $d(a_1)=d(a_2)=d(a_3)=0$, $d(b_1)=a_2a_3$, $d(b_2)=a_3a_1$ and $d(b_3)=a_1a_2$. This is the Sullivan model of a compact nilmanifold $M$. The element $\omega=a_1b_2+a_2b_3+a_3b_1$ fulfils $d(\omega)=0$ and $\omega^3=-6a_1a_2a_3b_1b_2b_3$ so $M$ is c-symplectic. Notice that $H^1(M)$ is spanned by the classes of the $a_i$ and that the cohomology class of any element of the form $a_ia_j$ is trivial in $H^2(M)$. Thus every two elements of $H^1(M)$ have trivial product. Nonetheless, we have $rk_0(M)\geq 3$:
\\Consider the extension $A=(\Lambda(X_1,X_2,X_3,a_1,a_2,a_3,b_1,b_2,b_3),D)$ where $D(X_i)=D(a_i)=0$ and $D(b_i)=d(b_i)+X_i$. The cohomology classes of the $X_i$ are identified with the classes of the nilpotent elements $-d(b_i)$. A spectral sequence argument shows that $H^*(A)$ is a finitely generated $\Lambda(X_1,X_2,X_3)$ module so this implies that $H^*(A)$ is finite dimensional. By Proposition \ref{modelprop}, this proves the claim.
\end{ex}

\section{Graded modules}

Let $R$ denote the polynomial ring in $r$ variables of degree 2. If $(\Lambda V,d)$ is the Sullivan minimal model of a space $X$ with a free $T^r$-action, a model for the Borel fibration is given by
\[(R,0)\ra (R\otimes \Lambda V,D)\ra (\Lambda V,d),\]

where the maps are given by the inclusion of $R$ and the projection onto $1\otimes \Lambda V$. In particular, a model for $X_{T^r}$ is given by $R\otimes \Lambda V$ with a twisted differential $D$. We see from the sequence that $D$ vanishes on $R\otimes 1$. Therefore, it is $R$-linear. As we are ultimately interested in $H^*(X)$, we want to replace $\Lambda V$ by its cohomology in the above twisted tensor product. To do this, one has to give up on the cdga structure and enter the realm of differential graded $R$-modules. Before we dive into their theory, let us quickly discuss the above construction, which is commonly referred to as the minimal Hirsch-Brown model of the action.

\subsection{Constructing the minimal Hirsch-Brown model via perturbations}\label{HBM}
The minimal Hirsch-Brown Model is constructed in great detail in Appendix B of \cite{CMITG}, using homotopy theory of graded modules. The reason we discuss the construction here is that we want to have close control over the passage from the Sullivan to the Hirsch-Brown model. While this can also be achieved using the standard construction, we will choose a slightly different approach and make use of Gugenheims theory of perturbations from \cite{OTCCOAF}. Consider the following situation:\\
We have two differential $R$-modules $(H,d_H)$ and $(M,d_M)$, where $H$ is a retract of $M$ in the sense that we find $f\colon H\ra M$ and $g\colon M\ra H$ with $g\circ f=\id_H$ and $f\circ g\simeq\id_M$ via a homotopy $\phi$. We further assume $\phi$ fulfils the side conditions $\phi^2=0$, $\phi f=0$,  and $g\phi=0$. We are interested in the following problem: Given a new differential $D_M$ on $M$, find a new differential on $H$ such that the two new differential modules are again homotopy equivalent. For all $n\geq 1$ we define
\[t:=D_M-d_M,\qquad t_n:=(t\phi)^{n-1}t,\qquad \Sigma_n:=t_1+\ldots+t_n,\]
and
\begin{align*}
\delta_{n+1}&:=d_H+g\Sigma_n f
&f_{n+1}:=f+\phi\Sigma_n f\\
g_{n+1}&:=g+g\Sigma_n\phi
&\phi_{n+1}:=\phi+\phi\Sigma_n\phi.
\end{align*}
Let $\mathcal{A}\subset\End(M)$ be the (non-commutative) algebra consisting of maps that arise as polynomials  in the operators $\phi$, $t$, and $d_M$. Let $J\subset \mathcal{A}$ denote the ideal generated by $t$.

\begin{lem}[\cite{OTCCOAF}]\label{perturblem} We have
\begin{align*}
&\delta_n\delta_n\in g J^n f, &D_Mf_n-f_n\delta_n\in J^n f,\\ & \delta_ng-gD_M\in g J^n,
&g_nf_n=\id_H,\\
&f_ng_n-\id_M-D_M\phi_n-\phi_n D_M\in J^n, &\phi_nf_n=0,\\ &g_n\phi_n=0, &\phi_n\phi_n=0.
\end{align*}
\end{lem}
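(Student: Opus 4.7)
The plan is a simultaneous induction on $n$ for all eight identities, using the convention $\Sigma_0 = 0$ so that $\delta_1 = d_H$, $f_1 = f$, $g_1 = g$, $\phi_1 = \phi$. The base case $n = 1$ is immediate: $gf = \id_H$, $\phi f = 0$, $g\phi = 0$, $\phi^2 = 0$ are assumed, while substituting $D_M = d_M + t$ into the three ``error'' identities and invoking both the chain-map property of $f, g$ and the homotopy identity $fg - \id_M = d_M \phi + \phi d_M$ leaves the residues $tf$, $-gt$, and $-t\phi - \phi t$, which lie in $Jf$, $gJ$, and $J$ respectively.

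For the inductive step the key observation is the telescoping recursion $t_{n+1} = t\phi\, t_n$, which yields
\[\delta_{n+1} = \delta_n + g t_n f,\qquad f_{n+1} = f_n + \phi t_n f,\qquad g_{n+1} = g_n + g t_n \phi,\qquad \phi_{n+1} = \phi_n + \phi t_n \phi.\]
The four purely algebraic statements ($g_{n+1}f_{n+1} = \id_H$, $\phi_{n+1}f_{n+1} = 0$, $g_{n+1}\phi_{n+1} = 0$, $\phi_{n+1}^2 = 0$) extend by inspection: every newly added summand contains $\phi$ adjacent to $f$, $g$ adjacent to $\phi$, or $\phi$ adjacent to $\phi$, and is annihilated by the side conditions. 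For the three chain-level identities one expands directly in terms of the primitive data. For example, using $d_M f = f d_H$,
\[D_M f_{n+1} - f_{n+1}\delta_{n+1} = tf + D_M \phi\, \Sigma_n f - fg\, \Sigma_n f - \phi \Sigma_n d_M f - \phi \Sigma_n fg \Sigma_n f,\]
and after rewriting $D_M \phi = (fg - \id_M - \phi d_M) + t\phi$ and $fg = \id_M + d_M\phi + \phi d_M$, the piece $tf + t\phi\,\Sigma_n f - \Sigma_n f$ telescopes to $t_{n+1} f$ via $\Sigma_n = t + t\phi\, \Sigma_{n-1}$, while the remaining $\phi$-prefixed terms cancel in pairs once one invokes the derived identity $d_M t + t d_M = -t^2$ (coming from $D_M^2 = 0$) and uses the side conditions to eliminate stray $\phi$'s adjacent to $f$ or another $\phi$. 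The analogous expansions handle the $g$-sided identity and the $\phi$-homotopy identity, and $\delta_{n+1}^2 \in gJ^{n+1}f$ then follows by combining the two chain-level congruences at level $n$ to commute $\Sigma_n$ past $f$ and $g$ up to controlled error.

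The main obstacle is the bookkeeping: the pattern of cancellations has to be traced carefully, and one must verify at each stage that every surviving residue actually picks up an extra factor of $t$ rather than getting stuck at a lower power. The side conditions $\phi^2 = 0$, $\phi f = 0$, $g\phi = 0$ are precisely the hygiene that makes this work, since any stray $\phi$ landing next to an $f$, a $g$, or another $\phi$ is killed, forcing the leftover expression to absorb a genuine additional copy of $t$ coming either from $t_{n+1}$ or from a $[d_M, t]$-type commutator.
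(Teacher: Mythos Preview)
The paper does not prove this lemma at all: it is stated with a citation to Gugenheim \cite{OTCCOAF} and no argument is given. So there is nothing to compare against; your proposal is genuinely additional content.

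Your inductive scheme is the standard route to the basic perturbation lemma and the sketch is essentially correct. The base case is verified cleanly. The four ``algebraic'' identities ($g_nf_n=\id_H$, $\phi_nf_n=0$, $g_n\phi_n=0$, $\phi_n^2=0$) in fact hold for every $n$ directly from the definitions by expanding $(g+g\Sigma_n\phi)(f+\phi\Sigma_n f)$ etc.\ and killing every cross term with a side condition; no induction is needed there. For the chain-level identities your telescoping observation $t+t\phi\,\Sigma_n=\Sigma_{n+1}$ is the right mechanism producing the fresh factor $t_{n+1}$, and the commutator identity $d_Mt+td_M=-t^2$ does exactly the job you claim. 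One caution: the $\phi$-prefixed residues do not literally ``cancel in pairs''; rather, after substituting $fg=\id_M+d_M\phi+\phi d_M$ into $\phi\Sigma_n fg\Sigma_n f$ and using $d_Mt+td_M=-t^2$, the surviving terms each carry at least $n+1$ explicit factors of $t$ (e.g.\ $\phi t d_M\phi\, t_n f$ and the like) and hence lie in $J^{n+1}f$. Working out the $n=1$ case explicitly makes this pattern transparent and would strengthen the write-up.
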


So if, in a pointwise sense, the above sequences of maps have a limit and $J^n$ converges to $0$, the limits solve the problem described above. Note that while no gradings are used in the above definitions, the constructions behave well with respect to gradings: If we start with the data of a retract of graded differential modules with differentials of degree $1$, where maps and homotopies are graded (of degree $0$ and $-1$), the newly obtained homotopy equivalence will also respect the grading. Now let us apply this to the case described in the beginning of this section.\\
Choose the data of a retract (maps and the homotopy) of graded differential $\mathbb{Q}$-modules between $(H^*(X),0)$ and $(\Lambda V,d)$ such that all conditions of the scenario described above are met (we will explicitly do this in the proof of Theorem \ref{C}). Now extend all the maps $R$-linearly to a homotopy equivalence of the differential graded $R$-modules $(R\otimes H^*(X),0)$ and $(R\otimes\Lambda V,1\otimes d)$. As we are not interested in $1\otimes d$ but in the twisted differential $D$ on $R\otimes\Lambda V$, set $t=D-1\otimes d$. Observe that since $D$ and $1\otimes d$ agree on the component mapping $1\otimes \Lambda V$ to itself, $t$ maps $R\otimes(\Lambda V)^{n}$ into $R\otimes (\Lambda V)^{\leq n-1}$. All operators in $\mathcal{A}$ preserve $R\otimes(\Lambda V)^{\leq n}$, so this implies that the operators in $J^{n+1}$ vanish on $R\otimes (\Lambda V)^{\leq n}$. In particular, on this domain, we have $t_{n+1}=0$ and $\Sigma_n=\Sigma_{n+1}$. Thus all sequences of maps in Lemma \ref{perturblem} converge (pointwise) to respective limits, which by the lemma define a differential $\delta$ on $R\otimes H^*(X)$ and a homotopy equivalence between $(R\otimes H^*(X),\delta)$ and $(R\otimes \Lambda V,D)$. On $R\otimes H^{\leq n}(X)$, the differential is given by
\[\delta=g\Sigma_nf.\]
Let $I\subset R$ denote the maximal homogeneous ideal. Note that since $t$ has values in $I\otimes \Lambda V$, $\delta$ takes values in $I\otimes H^*(X)$ as well.

\subsection{Boij-S\"oderberg theory}

While the main results from Boij-S\"oderberg theory have more refined and more general versions than what is presented here, we will focus on what we want to apply to the minimal Hirsch-Brown model later in the proof of Theorem \ref{C}. For a more complete picture, see e.g.\ \cite{BSTIAS}. Let $R$ be a polynomial ring in $r$ variables over some field. To stick to the usual conventions, we consider the variables of $R$ to be of degree $1$. \\
Let us briefly recall the basic facts about graded free resolutions (see e.g.\ the first chapter of \cite{TGOS}). For any finitely generated graded $R$-module $M=\bigoplus_k M^k$ and integer $n$, we denote by $M(n)$ the graded module with $M(n)^k=M^{n+k}$. A free resolution of $M$ is an exact complex
\[0\la M\la F_0\la F_1\la\ldots\]
of free modules $F_i=\bigoplus R(-j)^{\beta_{i,j}}$ and degree $0$ maps. $M$ has a unique minimal free resolution that is a direct summand of any free resolution of $M$. It is characterized by the fact that at each stage, the image of the map $F_i\la F_{i+1}$ is contained in $IF_i$, where $I$ is the maximal homogeneous ideal in $R$. The integers $\beta_{i,j}$ in the minimal free resolution are called the graded Betti numbers of $M$. The length of the minimal free resolution of $M$ is at most $r$ (meaning that $F_i=0$ for $i>r$) and only finitely many of the Betti numbers are non-zero. Hence, we can see the collection of the $\beta_{i,j}$ as an element of $\bigoplus_{j\in\mathbb{Z}}\mathbb{Z}^{r+1}\subset\mathbb{D}:=\bigoplus_{j\in\mathbb{Z}}\mathbb{Q}^{r+1}$. This element is called the Betti diagram of $M$.

\begin{ex}
For $R=\mathbb{Q}[x,y]$, consider the module $M=R/(x,y^2)$. A free resolution is given by
\[0\la M\la R\xleftarrow{\begin{pmatrix}
x&y^2
\end{pmatrix}} R(-1)\oplus R(-2)\xleftarrow{\begin{pmatrix}
y^2\\-x
\end{pmatrix}} R(-3)\la 0\]
As the maps between the free modules in the resolution have image in the multiples of $I$, this is the minimal free resolution of $M$. If we display the corresponding Betti diagram $(\beta_{i,j})\in\mathbb{D}$ as an array, showing only the window of $\mathbb{D}$ where $(\beta_{i,j})$ is non-trivial, we obtain
\setlength{\bigstrutjot}{3pt}
\[
\begin{blockarray}{cccc}
\begin{block}{cccc}
 &0&1&2\\
\end{block}
\begin{block}{c [ccc]}
0 & 1 & 0 & 0\bigstrut[t]\\
1 & 0 & 1 & 0\\
2 & 0 & 1 & 0\\
3 & 0 & 0 & 1\bigstrut[b]\\
\end{block}
\end{blockarray},\]
where $\beta_{i,j}$ is located in the $i$th column and $j$th row.
\end{ex}

A very important class of modules is given by those whose depth is equal to their dimension, which by the Auslander-Buchsbaum formula is equivalent to the fact that their codimension coincides with the length of their minimal free resolution. Those modules are called Cohen-Macaulay.\\
The main result of Boij-S\"oderberg theory is to classify all possible Betti diagrams of Cohen-Macaulay modules up to multiplication by a rational number. Let us make this result precise.

\begin{defn}
\begin{enumerate}[(i)]
\item A degree sequence of length $c$ is an element $d=(d_0,\ldots,d_c)\in\mathbb{Z}^{c+1}$ such that $d_0<\ldots<d_c$.
\item Let $d$ be a degree sequence of length $c$. We say that a finitely generated graded $R$-module has a pure diagram of type $d$ if its Betti numbers satisfy $\beta_{i,j}=0$ whenever $i>c$ or
$j\neq d_i$.
\end{enumerate}
\end{defn}

A digram is pure if and only if it has only one non-trivial entry in each column when displayed as above. Given a degree sequence $d$ of length $c$, we define the associated pure diagram $\pi(d)\in\mathbb{D}$ by
\[\pi(d)_{i,j}=\begin{cases} \prod_{k\neq i} \frac{1}{\vert d_k-d_i\vert}& \text{if $0\leq i\leq c$ and $j=d_i$}\\
0 &\text{else}\end{cases}.\]

One can prove that any Betti diagram of a codimension $c$ Cohen-Macaulay Module with pure resolution of type $d$ is a rational multiple of $\pi(d)$. This is a consequence of certain restrictions on Betti diagrams known as the \emph{Herzog-K\"uhl equations}, which simplify to the above formulas in the pure case. Thus pure diagrams are understood up to multiplication by a rational number. The Boij-S\"oderberg conjectures enable us to extend this understanding to the Betti diagrams of arbitrary Cohen-Macaulay modules. They were proved in \cite{BNOGMACOV} and state:

\begin{thm}\label{Boij-Soder}\begin{enumerate}[(i)]
\item The Betti diagram of a Cohen-Macaulay Module of codimension $c$ is a positive rational linear combination of Betti diagrams of codimension $c$ Cohen-Macaulay Modules with pure diagrams.
\item For any degree sequence $d$, there is a Cohen-Macaulay module with pure diagram of type $d$.
\end{enumerate}
\end{thm}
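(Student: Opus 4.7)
The theorem has two essentially independent pieces: the existence of pure resolutions in (ii), and the cone decomposition in (i). The plan is to establish (ii) first by an explicit construction, since the argument for (i) will rely on the pure diagrams $\pi(d)$ as extremal rays. For (ii), given a degree sequence $d=(d_0,\ldots,d_c)$, set $a_i:=d_i-d_{i-1}-1\geq 0$ and build a $GL(W)$-equivariant complex of free $R$-modules, with $R=\mathrm{Sym}(W)$, whose $i$-th term is a twist of a Schur module $S_{\lambda^{(i)}}(W)\otimes R$, where the partitions $\lambda^{(i)}$ are chosen so that the jumps between consecutive terms match the prescribed $a_i$. In characteristic zero, exactness follows by identifying the complex with a Koszul-type construction and computing cohomology via Borel--Weil--Bott; a characteristic-free argument realises the same complex as the pushforward along a proper map from a flag (or incidence) variety of a twist of a tautological bundle, and purity is immediate from the prescribed twists.

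For (i), the plan is to show the cone $\mathcal{C}\subset\mathbb{D}$ of positive rational multiples of Betti diagrams of codimension-$c$ Cohen--Macaulay modules is precisely the rational positive cone generated by the pure diagrams $\pi(d)$ supplied by (ii). The mechanism is a greedy algorithm: given $\beta\in\mathcal{C}$, pick (with respect to componentwise ordering) a minimal degree sequence $d$ such that $\beta_{i,d_i}\neq 0$ for every $i$ in the support of $\beta$, then subtract the largest rational multiple $\lambda\pi(d)$ for which $\beta-\lambda\pi(d)$ remains coordinatewise non-negative. Iterating this, well-foundedness of a suitable lex-ordered complexity on the support guarantees termination, provided each intermediate difference still lies in $\mathcal{C}$.

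The main obstacle is precisely this last proviso: showing the greedy difference actually belongs to $\mathcal{C}$, equivalently that the pure diagrams exhaust the extremal rays of $\mathcal{C}$. I would handle this by constructing enough supporting hyperplanes for $\mathcal{C}$. Following the Eisenbud--Schreyer strategy, one defines a bilinear pairing between Betti tables of graded $R$-modules and cohomology tables of coherent sheaves on $\mathbb{P}^{r-1}$, and proves it is non-negative on pairs arising from honest modules and sheaves; positivity is established by expressing the pairing as an Euler characteristic computable through a Tate resolution, where the signs in the alternating sum conspire term-by-term to force the required inequality. The linear functionals supplied by cohomology tables of sufficiently rich families of sheaves then cut out $\mathcal{C}$ as precisely the cone over the $\pi(d)$, completing (i). Throughout, the Herzog--K\"uhl equations mentioned in the text are what pin down the coefficient $\lambda$ at each greedy step and ensure the decomposition is uniquely determined once the order of subtraction is fixed.
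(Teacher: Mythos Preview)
The paper does not give its own proof of this theorem; it simply quotes the result from the literature, citing \cite{BNOGMACOV} for the full statement and \cite{TEOPFR} for the characteristic-zero case of (ii). There is thus no argument in the paper to compare your proposal against.

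That said, what you have sketched is an accurate outline of precisely those cited proofs: the Schur-functor/Borel--Weil--Bott construction for (ii) is the Eisenbud--Fl{\o}ystad--Weyman argument, the pushforward variant is the characteristic-free Eisenbud--Schreyer construction, and the greedy subtraction together with the Betti-table/cohomology-table pairing is the Eisenbud--Schreyer proof of (i). So your proposal is correct in substance, but you should be aware that in this paper the theorem is treated as a black box and no proof is expected.
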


Note that the characteristic zero case of $(ii)$ was proved earlier in \cite{TEOPFR}. Together, the two statements yield a precise description of the rational cone spanned by the Betti diagrams of all Cohen-Macaulay modules in $\mathbb{D}$. However, the problem which integral points in this cone actually are the Betti diagram of a module is unsolved.

\section{Proof of Theorem \ref{C}}
Again, denote by $R$ the rational polynomial ring in $r\geq 1$ variables and by $I$ its maximal homogeneous ideal.
One of the classical results regarding the toral rank is the inequality
\[\dim H^*(X)\geq 2rk_0(X)\]
by Allday and Puppe. The idea of the proof is that since $R\otimes H^0(X)$ gets mapped to $0$ in the minimal Hirsch-Brown model of a $T^r$-action, almost all of $R\otimes H^0(X)$ has to get killed in cohomology in order for the cohomology of $X_{T^r}$ to be finite. By projecting to this submodule, the differential induces a map
\[R\otimes H^{odd}\ra R\otimes H^0(X)\] with image in $I\otimes H_0(X)$, whose cokernel is finite dimensional over $\mathbb{Q}$. One can show, e.g.\ by means of the Krull height theorem, that such a map $R^l\ra R$ requires $l$ to be at least $r$. From this, the above inequality can be deduced with help of the Euler characteristic.\\
Our goal is to investigate how the above result can be improved if there are multiple copies of $R$ getting mapped to $0$ in the minimal Hirsch-Brown model. In other words: If we have a map $R^l\ra R^k$ with image in $IR^k$ and finite dimensional cokernel, what can be said about the relation of $k$ and $l$? One can show that $l\geq k+r-1$ is necessary and e.g.\ the matrices
\[\begin{pmatrix}
0&X&Y\\X&Y&0
\end{pmatrix},\quad \begin{pmatrix}
0&W&X&Y&Z\\
W&X&Y&Z&0
\end{pmatrix},\quad \begin{pmatrix}
0&0&X&Y&Z\\0&X&Y&Z&0\\X&Y&Z&0&0
\end{pmatrix}\]
suggest that this bound is actually sharp, as they have finite dimensional cokernel. While this is not as effective as one might have hoped, the above result can be improved if we bring the formal dimension of the cokernel into the picture. For the moment, we make the convention that $R$ is generated in degree $0$ with variables of degree $1$.

\begin{prop}\label{abschatzprop}
Let $k,l\geq 1$ and $f\colon R^l\ra R^k$ a graded $R$-linear map with respect to some grading on $R^l$ and the grading with generators concentrated in degree $0$ on $R^k$. Assume further that
 $\im(f)\subset IR^k$ and that $\coker(f)$ has finite length. Let $N$ be the maximal degree in which $\coker(f)$ is non-trivial. Then
\[l\geq \frac{N+r}{N+1}k.\]
\end{prop}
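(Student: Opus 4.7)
The plan is to reformulate the statement as a bound on graded Betti numbers of $M := \coker(f)$ and then invoke Boij-S\"oderberg theory. Since $\coker(f)$ has finite length, $M$ is Cohen-Macaulay of codimension $r$, with minimal free resolution of length $r$. The hypotheses that $R^k$ is generated in degree $0$ and $\im(f) \subset IR^k$ make $R^k \to M$ a minimal cover, so $\beta_{0,0}(M) = k$ and $\beta_{0,j}(M) = 0$ for $j \neq 0$. Furthermore, $R^l$ surjects onto the first syzygy $\ker(R^k \to M) = \im(f)$, which yields $l \geq \beta_1(M) := \sum_j \beta_{1,j}(M)$. The task therefore reduces to proving $\beta_1(M) \geq \frac{N+r}{N+1}\,k$.

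I next apply Theorem \ref{Boij-Soder} to decompose the Betti diagram of $M$ as a positive rational combination $\sum_d \lambda_d \pi(d)$ over degree sequences of length $r$. The vanishing $\beta_{0,j}(M) = 0$ for $j \neq 0$ forces $d_0 = 0$ in every contributing $d$. To extract a bound on $d_r$, I pass from the decomposition of Betti diagrams to the induced decomposition of Hilbert series: since any module with pure resolution of type $(0, d_1, \ldots, d_r)$ is of finite length and has regularity (equivalently, top nonzero degree) equal to $d_r - r$, the fact that $M$ vanishes above degree $N$ combined with the positivity of the $\lambda_d$ forces $d_r \leq N + r$ for each contributing $d$.

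The last step is a termwise optimization. For a pure diagram with $d_0 = 0$, the Herzog-K\"uhl equations give
\[
\frac{\beta_1(\pi(d))}{\beta_{0,0}(\pi(d))} \;=\; \prod_{k=2}^{r} \frac{d_k}{d_k - d_1}.
\]
This expression is decreasing in each $d_k$ and increasing in $d_1$, so under the constraints $1 \leq d_1 < d_2 < \ldots < d_r \leq N + r$ the minimum is attained at $d_1 = 1$ and $d_k = N + k$ for $k \geq 2$, where the product telescopes to $\frac{N+r}{N+1}$. Summing this termwise inequality against the $\lambda_d$ in the Boij-S\"oderberg decomposition then yields $\beta_1(M) \geq \frac{N+r}{N+1}\beta_{0,0}(M)$, completing the proof. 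The main subtle point I anticipate will be justifying the constraint $d_r \leq N + r$ cleanly, since this requires translating the Betti-level decomposition into a statement about the support of the Hilbert function.
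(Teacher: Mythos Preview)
Your proposal is correct and follows essentially the same line as the paper: reduce to a bound on $\beta_1/\beta_0$ of the finite-length module $M=\coker(f)$, apply the Boij--S\"oderberg decomposition into pure diagrams, and minimize the Herzog--K\"uhl ratio under the resulting constraints on the degree sequences.

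The one substantive difference is how you constrain the degree sequences. You pass to Hilbert series and argue by positivity that each contributing $\pi(d)$ has $d_r-r\leq N$. The paper instead invokes the Castelnuovo--Mumford regularity of $M$: since $M$ has finite length with top degree $N$, one has $\mathrm{reg}(M)=N$, hence $\beta_{i,j}(M)=0$ for $j>N+i$; positivity of the Boij--S\"oderberg decomposition then immediately forces $d_i\leq N+i$ for every contributing pure diagram and every $i$. This is more direct and sidesteps exactly the ``subtle point'' you flag, since no translation to Hilbert functions is needed. Your Hilbert-series argument is valid (each $\pi(d)$ is, up to a positive scalar, the Betti diagram of an actual finite-length module, so its Hilbert function is non-negative with top degree $d_r-r$), but the regularity argument is the cleaner way to get there.
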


\begin{proof}
We give $R^l$ the grading that turns $f$ into a degree $0$ map of graded $R$-modules. Consider the minimal free resolution $\im(f)\la F_0\la F_1\la\ldots$ of $\im(f)$. As minimality is equivalent to the image of the map $F_{i}\la F_{i+1}$ being contained in $IF_i$ at each stage, we deduce that
\[\coker(f)\la R^k\la F_0\la F_1\la\ldots\]
is a minimal free resolution of $\coker(f)$. For the graded Betti numbers $\beta_{i,j}$ of $\coker(f)$, we set $\beta_i=\sum_j\beta_{i,j}$. By minimality of the resolution, we deduce that $R^l$ contains $F_0$ as a direct summand so $l\geq \beta_1$ and $k=\beta_0$.\\
We have a closer look at the Betti diagram of $\coker(f)$. The Castelnuovo-Mumford regularity of $\coker(f)$ is equal to $N$ (c.f.\ \cite[Cor.\ 4.4]{TGOS}). This means that $\beta_{i,j}=0$ for $j>N+i$. Furthermore, note that the Krull dimension and the depth of $\coker(f)$ are equal to $0$ because $\coker(f)$ has finite length. So $\coker(f)$ is a Cohen-Macaulay module of codimension $r$. By Theorem \ref{Boij-Soder}, the Betti diagram of $\coker(f)$ can be written as a positive linear combination of pure diagrams of length $r$. This implies that a lower bound on the ratio of the first two Betti numbers of such a pure diagram gives a lower bound for $\beta_1/\beta_0$ and thus for $l/k$.\\
Let $b_0,\ldots,b_r$ be the Betti numbers of a codimension $r$ Cohen-Macaulay module $C$ with pure Betti diagram of type $d$, where $d$ is a degree sequence of length $r$. We can assume that $d_0=0$ and $d_i\leq N+i$, for orherwise the Betti diagram of $C$ can not occur nontrivially in a positive linear combination forming the Betti diagram of $\coker(f)$ due to the degree restrictions above. The Herzog-K\"uhl equations yield
\[\frac{b_1}{b_0}=\prod_{i=2}^r \frac{d_i}{d_i-d_1}.\]
This ratio is minimal when the $d_i$ take their maximum values $d_{i}=N+i$ and $d_1$ takes its minimum value $1$. In total, we get

\[\frac{b_1}{b_0}\geq\prod_{i=2}^{r}\frac{N+i}{N+i-1}=\frac{N+r}{N+1},\]
which proves the claim.
\end{proof}

Note that this bound is sharp: By part $(ii)$ of Theorem \ref{Boij-Soder}, for any $N\geq 0$, there is a codimension $r$ Cohen-Macaulay module with pure diagram of type $d$, where $d_0=0$, $d_1=1$ and $d_i=N+i$ for $2\leq i\leq r$. This module is of finite length and its maximal non-trivial degree coincides with the Castelnuovo-Mumford regularity $N$. Consequently, the first map in the free resolution has the desired properties.\\
The question that ensues is when we actually have generators in the minimal Hirsch-Brown model that map to 0. The following trick, used in the proof of the TRC for Hard-Lefschetz manifolds in \cite{BOTTR}, will be fundamental to assume the existence of such generators.

\begin{lem}\label{extalglem}
Let $X$ be a finite CW-complex with an almost free $T^r$-action and 
\[(R,0)\ra(R\otimes \Lambda V,D)\ra (\Lambda V,d)\]
a Sullivan model for the associated Borel fibration. If $Z\subset V^1$ is a subspace such that $d(Z)=0$ and $D|_{1\otimes Z}$ is injective, then $Z$ generates an exterior algebra in $H^*(X)$. In particular, if $\fd(X)-1\leq r\leq \fd(X)$, then $H^*(X)$ contains an exterior algebra on $r$-generators of degree $1$.
\end{lem}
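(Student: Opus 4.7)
The plan is to prove the first part by reducing to the case $\dim Z = r$ via restriction to a subtorus and then analyzing the Koszul sub-cdga $(R \otimes \Lambda Z, D)$; the second part follows from the first by a short argument comparing $\fd(X_{T^r})$ with the degree-$2$ generators of $R$.

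For the reduction, I apply Lemma \ref{torlem} to a linear complement $W$ of $\tau(Z)$ inside $H^2(BT^r)$ to obtain a subtorus $T^s \subset T^r$ (with $s = \dim Z$) for which $W$ equals the kernel of the restriction $H^2(BT^r) \to H^2(BT^s)$. Quotienting $R$ by the ideal generated by $W$ yields a smaller cdga extension $(R'',0) \to (R'' \otimes \Lambda V, D'') \to (\Lambda V, d)$ in which $Z$ still satisfies the hypotheses and now $\dim Z = \dim (R'')^2$. Since the conclusion is a statement purely about $(\Lambda V, d)$, from here on we may assume $\dim Z = r$.

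With $\dim Z = r$, the conditions $d|_Z = 0$ and $R^1 = 0$ together with $D \equiv 1 \otimes d \pmod{I \otimes \Lambda V}$ force $D(1 \otimes z) = \tau(z) \in R^2$ for every $z \in Z$, so $(R \otimes \Lambda Z, D)$ is a sub-cdga of $(R \otimes \Lambda V, D)$ isomorphic to the Koszul complex on the regular sequence $\tau(Z) \subset R^2$; in particular its cohomology is $\mathbb{Q}$ concentrated in degree zero. I then compare the long exact sequences of the short exact sequences
\[0 \to I \otimes \Lambda Z \to R \otimes \Lambda Z \to \Lambda Z \to 0 \quad \text{and} \quad 0 \to I \otimes \Lambda V \to R \otimes \Lambda V \to \Lambda V \to 0.\]
Koszul acyclicity forces the connecting map $\partial_Z \colon H^k(\Lambda Z) \to H^{k+1}(I \otimes \Lambda Z)$ to be an isomorphism for every $k \geq 1$, so the Koszul differential applied to any nonzero $\omega \in \Lambda^k Z$ represents a nonzero class. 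The hard part will be to show that the map $H^{k+1}(I \otimes \Lambda Z) \to H^{k+1}(I \otimes \Lambda V)$ induced by the sub-cdga inclusion remains injective on the image of $\partial_Z$; my approach is to choose a complement $V''$ of $Z$ in $V$ and trace the $\Lambda V''$-degree of a hypothetical $D$-primitive for $D(1 \otimes \omega)$ in $I \otimes \Lambda V$, using minimality of the Sullivan model to rule out cancellation of the $\Lambda V''$-degree zero component. Once this is in place, $\partial_V[\omega] \neq 0$ forces $[\omega] \neq 0$ in $H^k(X)$, giving the desired injection $\Lambda Z \hookrightarrow H^*(X)$.

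For the second claim, Lemma \ref{fdlem} gives $\fd(X_{T^r}) = \fd(X) - r \leq 1$, so $H^2(X_{T^r}) = 0$. Every generator $x_i \in R^2$, being a $D$-cocycle, is therefore a $D$-coboundary $x_i = D(w_i)$ for some $w_i \in (R \otimes \Lambda V)^1$. Since $R^1 = 0$, necessarily $w_i = 1 \otimes z_i$ with $z_i \in V^1$; decomposing $D(1 \otimes z_i) = 1 \otimes d(z_i) + t(1 \otimes z_i)$ by bidegree, the $(\Lambda V)^2$-component forces $d(z_i) = 0$ while the $R^2$-component gives $\tau(z_i) = x_i$. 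The elements $z_i$ are linearly independent because their transgressions are, so $Z := \langle z_1, \ldots, z_r \rangle$ satisfies the hypotheses of the first part with $\dim Z = r$, yielding the claimed $2^r$-dimensional exterior subalgebra inside $H^*(X)$.
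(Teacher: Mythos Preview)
Your argument for the second claim is correct and matches the paper's. For the first claim there is a genuine gap. The reduction to $\dim Z=r$ and the Koszul/long-exact-sequence setup are fine, but the ``hard part'' you identify is not actually proved, and the sketched attack does not work. By naturality of the connecting map, the injectivity of $H^{k+1}(I\otimes\Lambda Z)\to H^{k+1}(I\otimes\Lambda V)$ on $\im\partial_Z$ is the same as injectivity of $\partial_V\circ i_*$ (since $\partial_Z$ is an isomorphism in positive degrees), where $i_*\colon\Lambda Z\to H^*(\Lambda V,d)$ is exactly the map whose injectivity you are trying to establish; the LES reformulation therefore has not reduced the difficulty at all. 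Your proposed $\Lambda V''$-filtration does not help either: $D$ does not respect the $\Lambda V''$-grading, since for $v''\in V''$ both $d(v'')$ and the twist $t(v'')$ may have nonzero components in $R\otimes\Lambda Z$, so the $\Lambda V''$-degree-zero part of $D(\eta)$ receives contributions from $\eta_1$ that minimality of $(\Lambda V,d)$ alone cannot exclude (minimality controls total $\Lambda V$-word length, not $\Lambda V''$-degree).

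The paper avoids all of this machinery with a short induction on the exterior degree $l$ in $\Lambda Z$. Assuming $\im(d)\cap\Lambda^l Z=0$ and given $x\in\Lambda V$ with $d(x)\in\Lambda^{l+1}Z$, one writes $D(x)=d(x)+y$ with $y\in R^{\ge 2}\otimes\Lambda V$ and uses $D^2=0$ to get $D(y)=-D(d(x))\in R^2\otimes\Lambda^l Z$. The crucial observation is that the $R^2$-component of $D(y)$ is $(1\otimes d)$ applied to the $R^2$-component of $y$; the induction hypothesis then forces $D(y)=0$, hence $D(d(x))=0$, and injectivity of the Koszul differential on $\Lambda^{>0}Z$ yields $d(x)=0$. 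The filtration that makes the argument go through is thus by $R$-degree, not by $\Lambda V''$-degree, and no reduction to a subtorus is needed.
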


\begin{proof}
We show the first part via induction.  Assume that $\im(d)\cap \Lambda^l Z=0$ and $d(x)\in \Lambda^{l+1} Z$ for some $x\in\Lambda V$. We can write
$D(x)=d(x)+y$ for some $y\in R^{\geq 2}\otimes \Lambda V$. By assumption, $D(Z)$ is contained in $R^2\otimes 1$ so $D$ maps $d(x)$ into $R^2\otimes\Lambda^lZ$. As $D^2(x)=0$, it follows that $D(y)=-D(d(x))\in R^2\otimes\Lambda^lZ$. But as the $R^2\otimes \Lambda V$ component of $D(y)$ is, for degree reasons, equal to $(1\otimes d)(y)$ this implies $D(y)=0$ by the induction hypothesis. Hence, $D(d(x))=0$ and also $d(x)=0$ because $D$ is injective on $\Lambda Z$. By induction, we obtain that $\im(d)\cap \Lambda Z$ is trivial so $\Lambda Z$ projects injectively into cohomology.\\
Now, if $\fd(X)-1\leq r\leq \fd(X)$, then $\fd(X_{T^r})\leq 1$ so $D$ must map surjectively onto $R^2\otimes 1$, which is $r$-dimensional. Preimages must, for degree reasons, lie in $V^1\cap \ker(d)$ so the second part follows.
\end{proof}

\begin{proof}[Proof of Theorem \ref{C}]
For the proof of $(i)$, consider a Sullivan model
\[(R,0)\ra(R\otimes \Lambda V,D)\ra (\Lambda V,d)\] for the Borel fibration of the action. By assumption, $\ker(d|_{V^1})$ is $b$-dimensional. Now decompose $\ker(d|_{V^1})=Z\oplus Z'$, where $Z'=\ker(D|_{V^1})$ and let $k$ be the dimension of $Z'$. By Lemma \ref{extalglem}, $Z$ generates an exterior algebra in cohomology. If $k=0$, Theorem \ref{C} holds so in what follows we will assume $k\geq 1$. If $r=n$, then $H^1(X_{T^r})=0$, which implies $k=0$ so we will assume $r<n$ as well.\\
Let us now construct the minimal Hirsch-Brown model of the action. We can decompose $\Lambda V=A\oplus B\oplus C$ as vector spaces, where $d$ vanishes on $A$ and $B$ and maps $C$ isomorphically onto $B$. Note that since $\Lambda Z\oplus Z'$ projects injectively into cohomology, the decomposition above can be chosen in a way that $\Lambda Z\oplus Z'$ is contained in $A$. Choosing cycles from $A$ (respectively projecting onto cohomology) gives an isomorphism $H^*(X)\cong A$. Sticking to the notation in section \ref{HBM} we define
\begin{align*}
f&\colon H^*(X)\cong A\ra \Lambda V\\
g&\colon \Lambda V\ra A\cong H^*(X)\\
\phi&\colon \Lambda V\ra B\xrightarrow{d^{-1}} C\ra \Lambda V,
\end{align*}
where all non-specified arrows correspond to the inclusions and projections with respect to the decomposition. One checks that this is the data of a retract between $H^*(X)$ and $\Lambda V$, fulfilling the requirements made in the construction of the Hirsch-Brown model. Let $\delta$ denote the corresponding differential on $R\otimes H^*(X)$. On $R\otimes H^{\leq l}(X)$, it is explicitly given by
\[\delta=g\Sigma_l f.\]
Note that since $t$ maps $\Lambda Z\oplus Z'$ to itself and thus into $A$, we have $\phi t|_{\Lambda Z\oplus Z'}=0$ and in particular $\delta|_{\Lambda Z\oplus Z'}=gtf=gDf$.
This implies $\delta(R\otimes \overline{\Lambda Z})\subset R\otimes\overline{\Lambda Z}$, where $\overline{\Lambda Z}\subset H^*(X)$ is the exterior algebra induced by $\Lambda Z$, and $\delta(R\otimes\overline{Z'})=0$, $\overline{Z'}$ being defined analogously.\\
Hence, when composed with a suitable projection onto $R\otimes \overline{Z'}$, $\delta$ induces a map
\[R\otimes (H^*(X)/\overline{\Lambda Z})^{even}\ra R\otimes \overline{Z'}\]
with image in $I\otimes \overline{Z'}$, whose cokernel is finite dimensional because the cohomology of the Hirsch-Brown model is. The formal dimension of $X_{T^r}$ is $n-r$. This implies that when we give $R\otimes \overline{Z'}$ the grading from Proposition \ref{abschatzprop}, where generators are in degree $0$ and the variables of $R$ have degree $1$, the maximal degree $N$ in which the cokernel is non-trivial fulfils $2N+1\leq n-r$, so $N\leq \frac{n-r-1}{2}$. Proposition \ref{abschatzprop} yields
\[\dim (H^*(X)/\overline{\Lambda Z})^{even}\geq \frac{n+r-1}{n-r+1}k.\]
As $\overline{\Lambda Z}$ lives equally in odd and even degrees, we obtain
\[\dim H^*(X)^{even}\geq \frac{n+r-1}{n-r+1}k +2^{b-k-1}.\]
Note that the case $b=k$ is special but still true. Using that the Euler characteristic of $X$ is $0$ completes the proof of $(i)$.\\
In the situation of $(ii)$, note that in a minimal Hirsch-Brown model of an almost free $T^r$-action, $R\otimes H^{<k}(X)$ gets mapped to 0 for degree reasons. Projecting onto this submodule, the differential induces a map
\[R\otimes H^{odd}(X)\ra R\otimes H^{<k}(X)\]
which has image in $I\otimes H^{<k}(X)$ and finite dimensional cokernel as the cohomology of $X_T$ is finite dimensional. Applying Proposition \ref{abschatzprop} as in $(i)$ together with the fact that the Euler characteristic is $0$ yields the desired lower bound.
\end{proof}

Let us take some time  to put Theorem \ref{C} into perspective and compare it to the known results. The quotient $(n+r-1)/(n-r+1)$ approaches $2r-1$ as $r$ approaches $n$. In this sense, for a fixed space $X$, we can say that our lower bound in $(ii)$ approaches a linear bound of slope $4\dim H^{<k}(X)$ for smaller cohomogenities.\\
For the bound in $(i)$, calculations show that for any $a,b\in\mathbb{R}$ the real valued function $f(x)=ax+2^{b-x}$ has global minimum $a(b-\log_2(a)+(1+\log\log(2))/\log(2))$. Thus, in a vague sense, approaching the extreme case $r=n$, the bound approximates to a linear bound with slope close to $4(b-\log_2(n))$.
However, note that in both cases, the approximation occurs rather late and that the result is not interesting for very small cohomogenities, such as $n-1\leq r\leq n$, where the TRC is more or less trivial as seen in Lemma \ref{extalglem} above. Still, the theorem gives an improved lower bound in many cases.\\
The tables below are meant to give a feeling for the behaviour of our lower bounds and when Theorem \ref{C} is an improvement of the established linear bound of slope $8/3$ by Amann mentioned in the introduction. In both tables, we have set $n=10$. Let us begin with estimate $(i)$:\\
\begin{center}
\begin{tabular}{c|ccccccccccc}
$r$&1&2&3&4&5&6&7&8&9&10\\
\hline $b=4$ &8&9&10&12&13&14&16&16&16&16\\
\hline $b=6$&12&14&16&19&22&26&32&39&50&64\\
\hline $b=10$&20&24&28&34&41&50&64&84&122&216
\end{tabular}
\end{center}~\\\\
Now let us have have a look at scenario $(ii)$ of Theorem \ref{C} where we set $l=\dim H^{<k}(X)$.\\
\begin{center}
\begin{tabular}{c|cccccccccc}
$r$&1&2&3&4&5&6&7&8&9&10\\

\hline $l=4$
&8&10&12&15&19&24&32&46&72&152\\
\hline $l=6$&12&15&18&23&28&36&48&68&108&228\\
\hline $l=10$
&20&25&30&38&47&60&80&114&180&380
\end{tabular}\\

\end{center}

\section{Lower bounds for c-symplectic spaces}

The motivating examples for Theorem \ref{C} are, of course, c-symplectic spaces, where it immediately combines with Theorem \ref{B} to give 
\[\dim H^*(M)\geq \min_{k=0,\ldots,r} \frac{2n+r-1}{2n-r+1}2k+2^{r-k}\]
for a finite, c-symplectic CW-complex $M$ of formal dimension $2n$ with almost free $T^r$-action (one quickly checks that formula $(i)$ from Theorem \ref{C} grows monotonously in $b$ so by Theorem $B$, we can replace $b$ by $r$). This bound, in contrast to Theorem \ref{C}, requires no assumptions on Betti numbers and, in the same spirit as the discussion of the previous bounds above, approaches a quadratic bound for small cohomogenities. We can push this result a little more if we add Poincar\'e duality to the picture.

\begin{thm}\label{symplabsch}
Let $M$ be a c-symplectic finite CW-complex of formal dimension $2n$ with an almost free $T^r$-action. Then
\begin{enumerate}[(i)]
\item \[\dim H^*(M)\geq\min_{k=0,\ldots,r}\frac{2n+r-1}{2n-r+1}4k+4\sum_{i=0}^{\frac{n-1}{2}}\binom{r-k}{2i}\]
if $n$ is odd and $r\geq n+1$.
\item \[\dim H^*(M)\geq\min_{k=0,\ldots,r}\frac{2n+r-1}{2n-r+1}4k+4\sum_{i=0}^{\frac{n-2}{2}}\binom{r-k}{2i}+2\binom{r-k}{n}\]
if $n$ is even and $r\geq n$.
\end{enumerate}
In particular, the TRC holds for $M$ if $n\leq 4$.
\end{thm}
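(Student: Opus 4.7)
The plan is to run the argument of Theorem~\ref{C}(i) for c-symplectic $M$ while tracking cohomological degrees more carefully, and then to exploit the Poincar\'e duality of $M$ to double-count. Starting from a Sullivan model of the Borel fibration, I would extract just as in the proof of Theorem~\ref{C} an exterior subalgebra $\overline{\Lambda Z}\subset H^*(M)$ on $r-k$ degree-$1$ generators together with the induced $R$-linear map
\[R\otimes (H^*(M)/\overline{\Lambda Z})^{even}\longrightarrow R\otimes \overline{Z'}\]
whose cokernel has finite length and maximum degree $N\leq (2n-r-1)/2$ in the Proposition~\ref{abschatzprop} grading. The key new observation is that since the cokernel has Castelnuovo--Mumford regularity $N$, its first syzygies sit in degrees $\leq N+1$; translated back to the cohomological grading, the source generators realizing the estimate $\dim(H^*/\overline{\Lambda Z})^{even}\geq\frac{2n+r-1}{2n-r+1}k$ lie in cohomological degrees $\leq 2n-r+1$. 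Under the hypothesis $r\geq n+1$ in case (i) (resp.\ $r\geq n$ in case (ii)) this forces the contributing even-degree classes into $H^{<n}$ (resp.\ $H^{\leq n}$).

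I would then invoke Poincar\'e duality $\dim H^i(M)=\dim H^{2n-i}(M)$ combined with $\chi(M)=0$. These yield the identities
\[\dim H^*(M)=4\dim H^{<n,even}(M)\]
for $n$ odd, and $\dim H^*(M)=4\dim H^{<n,even}(M)+2\dim H^n(M)$ for $n$ even. Writing $V=\overline{\Lambda Z}$ and choosing a graded vector space complement $W\subset H^*(M)$, the exterior algebra contributes $\dim V^{<n,even}=\sum_{i=0}^{\lfloor (n-1)/2\rfloor}\binom{r-k}{2i}$ and, in the even-$n$ case, the additional middle-degree term $\dim V^n=\binom{r-k}{n}$. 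In case (i) the full $W$-contribution lies in $H^{<n,even}$ and is thereby quadrupled; combined with the exterior algebra terms this reproduces the bound of (i) at once. Case (ii) requires one further step to control the potential middle-degree piece of $W$: the point is that pure diagrams in the Boij--S\"oderberg decomposition with $d_1=N+1=\tfrac{n}{2}$ carry a ratio $b_1/b_0$ strictly larger than the extremal $\tfrac{N+r}{N+1}$, and exploiting this asymmetry should still yield the factor~$4$ on $k$.

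The main technical obstacle is exactly this refined Boij--S\"oderberg argument controlling generators precisely at the Poincar\'e duality middle in case (ii). Once both estimates are established, the TRC for $n\leq 4$ follows by combining them with the bound $\min_k\frac{2n+r-1}{2n-r+1}2k+2^{r-k}$ coming from Theorem~\ref{C}(i) and Theorem~\ref{B}: the latter dominates once $r$ is close to $b_1$, while Theorem~\ref{symplabsch} is decisive in the intermediate range. A finite tabulation over admissible pairs $(n,r)$ with $n\leq 4$ then verifies $\dim H^*(M)\geq 2^r$ in every case.
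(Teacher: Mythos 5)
Your proposal follows essentially the same route as the paper: run the Hirsch--Brown/Boij--S\"oderberg argument of Theorem~\ref{C}(i), track the cohomological degrees of the generators of the first syzygy module of $\coker(p)$ via the Castelnuovo--Mumford regularity bound $N\leq(2n-r-1)/2$, observe that under $r\geq n+1$ (resp.\ $r\geq n$) these generators land in $H^{<n,\,\mathrm{even}}$ (resp.\ $H^{\leq n,\,\mathrm{even}}$), and then quadruple via Poincar\'e duality and $\chi=0$, with the middle degree only doubled when $n$ is even. Case (i) and the $k=0$ edge case are handled correctly, and your degree bookkeeping (free-resolution degree $l$ corresponding to cohomological degree $2l$ in the source) matches the paper's.

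The one real gap is exactly the step you defer in case (ii). You correctly identify that generators sitting in free-resolution degree $n/2$ (i.e.\ pure diagrams with $d_1=n/2$ in the decomposition) only get doubled, and that one must show the larger ratio $b_1/b_0$ of such diagrams compensates for the lost factor of $2$. But ``should still yield the factor $4$'' is not automatic: what is needed is the quantitative inequality $S\bigl(\tfrac{n}{2}\bigr)\geq 2S(1)$, where $S(d_1)=\prod_{i=2}^{r}\frac{2n-r-1+2i}{2n-r-1+2i-2d_1}$, so that the contribution $2S(\tfrac n2)\alpha$ from the middle-degree mass $\alpha$ dominates $4S(1)\alpha$ and the minimum of the combined bound is attained at $\alpha=0$. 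The paper proves this as Lemma~\ref{rumrechnen}, and it is not a one-line verification (an induction on $n$ for $3\leq r\leq 5$ plus a separate estimate for $r\geq 6$); moreover it only holds for $n\geq 4$, so the case $n=2$ must be handled separately (the paper falls back on Corollary~\ref{kleineabsch} there), and for $r\geq n+2$ one should note that $\alpha=0$ is forced by the regularity bound. Your endgame for the TRC with $n\leq 4$ is also slightly off in emphasis: for $r<n$ the theorem as stated does not apply, and the small-$r$ range is covered by $\dim H^*(M)\geq 4\,\rnk_0(M)$ (Corollary~\ref{kleineabsch}) rather than by Theorem~\ref{C}(i), while $r\in\{2n-1,2n\}$ is covered by Lemma~\ref{extalglem}; with those substitutions the finite tabulation does close the argument.
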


\begin{rem}
Calculations show that, for $n\geq 7$, the minimum of the bounds in Theorem \ref{symplabsch} is realized for $k>r-n$. In particular, in both the even and the odd bound, we can replace the sum of the binomial coefficients by $2^{r-k+1}$. So both bounds coincide and are exactly double the bound mentioned at the beginning of this section. However, for $n\leq 6$ the distinction is necessary.
\end{rem}

\begin{proof}
The proof is, for the most part, identical to the proof of Theorem $C$ but we pay more attention to the degrees of elements to be able to use Poincar\'e duality. Let $k,Z,Z'$ be as in the proof of Theorem $C$ and construct the map
\[p:R\otimes (H^*(M)/\overline{\Lambda Z})^{even}\ra R\otimes \overline{Z'}\]
as before. Again, the case $k=0$ (and thus also $r=2n$) must be treated separately: $H^*(M)$ contains an exterior algebra on $r$ generators of degree $1$ and fulfils Poincar\'e duality  so the theorem holds. Thus in what follows we will assume $k>0$ and $r<2n$.\\
Now instead of just applying Proposition \ref{abschatzprop}, let us refine the argument a little. As in the proof of Proposition \ref{abschatzprop}, we can consider $p$ as the first step in a free resolution of $\coker(p)$. Now decompose $R\otimes (H^*(M)/\overline{\Lambda Z})^{even}=F\oplus F'$, where $F,F'$ are free $R$ modules and $p|_F:F\ra R\otimes \overline{Z'}$ is the first stage of a minimal free resolution of $\coker(p)$. The space of generators $F\otimes \mathbb{Q}\cong F/IF$ of $F$ injects into $(H^*(M)/\overline{\Lambda Z})^{even}$. An element of $F$ that is of degree $l$ with respect to the free resolution grading as in Proposition \ref{abschatzprop} (meaning that variables of $R$ have degree $1$, $\overline{Z'}$ is concentrated in degree $0$ and $p$ has degree $0$) maps to an element of degree $l$ in the free resolution grading of $R\otimes \overline{Z'}$. In the cdga grading, $\overline{Z'}$ has degree $1$ and variables are of degree $2$ so this corresponds to degree $2l+1$. Taking into account that $p$ is induced by the Hirsch-Brown differential, which is of degree 1, this implies that the degree $l$ component of $F\otimes\mathbb{Q}$ (with free resolution grading) injects into the degree $2l$ component of $(H^*(M)/\overline{\Lambda Z})^{even}$ (with cdga grading).\\
Now assume first that $n$ is odd. The Castelnuovo-Mumford regularity $N$ of $\coker(p)$ satisfies $N\leq (2n-r-1)/2\leq(n-2)/2$. In particular, the generators of $F$ lie in degrees $\leq n/2$ and thus contribute to $(H^*(M)/\overline{\Lambda Z})^{<n,~even}$. Analogous to the proof of Theorem \ref{C}, adding the part of $\overline{\Lambda Z}$ that lies in even degrees below $n$, we obtain
\[\dim H^{<n,~even}(M)\geq \frac{2n+r-1}{2n-r+1}k+\sum_{i=0}^{\frac{n-1}{2}}\binom{r-k}{2i}.\]
Now the lower bound for $\dim H^*(M)$ follows by using Poincar\'e duality and the Euler characteristic to multiply this by $4$.\\
In case $n$ is even, we have to pay extra attention to degree $n$ because it can not be doubled using Poincar\'e duality. First, we deal with the case when $n\leq r\leq n+1$. The case $r\geq n+2$ will be exposed as an easy special case. For some $d_1<\frac{2n-r+3}{2}$, define
\[S(d_1):=\prod_{i=2}^{r} \frac{2n-r-1+2i}{2n-r-1+2i-2 d_1}.\] Recall that for a codimension $r$ Cohen-Macaulay module of Castelnuovo-Mumford regularity $\leq\frac{2n-r-1}{2}$ with pure Betti diagram of type $d$, we can bound the ratio of the first two Betti numbers by
\[\frac{\beta_1}{\beta_0}\geq S(d_1)\] as in the proof of Proposition \ref{abschatzprop}. The Betti diagram of $\coker(p)$ decomposes as a positive linear combination $a_1\pi(d^1)+\ldots+a_l\pi(d^l)$ of pure diagrams. The sum of all $a_i\pi(d^i)_{0,0}$ equals $k$, the zeroth Betti number of $\coker(p)$. Define $\alpha$ to be the sum of those $a_i\pi(d^i)_{0,0}$ for which $d^i_1=n/2$. Note that $d_i>n/2$ is not possible due to the degree restrictions of the regularity. At the first stage of the minimal free resolution of $\coker(p)$ we obtain
\[\dim(F\otimes \mathbb{Q})^\frac{n}{2}\geq S\left(\frac{n}{2}\right)\alpha\quad\text{and}\quad\dim(F\otimes \mathbb{Q})^{<\frac{n}{2}}\geq S(1)(k-\alpha)\]
because $S(1)\leq S(d_i)$ for any of the $d_i$. As above, adding $\overline{\Lambda Z}$, we obtain
\[\dim H^n(M)\geq S\left(\frac{n}{2}\right)\alpha+\binom{r-k}{n}\]and\[\dim H^{<n,~even}(M)\geq S(1)(k-\alpha)+\sum_{i=0}^{\frac{n-2}{2}}\binom{r-k}{2i}.\]
When bounding all of $H^*(M)$, we can count the $<n$ part twice, due to Poincar\'e duality and then double everything using the Euler characteristic. In total, we obtain
\[\dim H^*(M)\geq  S(1)4(k-\alpha)+S\left(\frac{n}{2}\right)2\alpha+4\sum_{i=0}^{\frac{n-2}{2}}\binom{r-k}{2i}+2\binom{r-k}{n}.\]
Note that, for $\alpha=0$, this is the bound claimed in the theorem. For $n\leq r\leq n+1$ and $n\geq 4$, Lemma \ref{rumrechnen} below shows that $S\left(\frac{n}{2}\right)\geq 2S(1)$ so the above bound is minimal for $\alpha=0$ and  the theorem holds. When $n=2$, the desired bound is dominated by Corollary \ref{kleineabsch}. If $r\geq n+2$, $\alpha=0$ must hold from the start because of the degree restrictions of the regularity. To verify that this (in combination with Corollary \ref{kleineabsch}) proves the TRC for $n\leq 4$, see Remark \ref{bembem} below.
\end{proof}

For the remaining values of $r$, the bound can not be doubled with the help of Poincar\'e duality. However, we still get a rather complicated, intermediate result that lies between the original bound and the bound from Theorem \ref{symplabsch}. Let us do some calculations first, where $S(\cdot)$ is defined as in the proof of Theorem \ref{symplabsch}.

\begin{lem}\label{rumrechnen}
Let $n\geq 4$ even and $3\leq r\leq n+1$. Then $S\left(\frac{n}{2}\right)\geq 2S(1)$.
\end{lem}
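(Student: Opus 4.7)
The plan is to simplify the inequality and then proceed by a symmetric pairing of the factors. Setting $P(r) := S(n/2)/S(1)$, the definitions give
\[P(r) = \prod_{i=2}^{r} \frac{2n-r-3+2i}{n-r-1+2i}.\]
Substituting $j = i-2$ and writing $a_j := n-r+3+2j$ for $j = 0,\ldots,r-2$, I note that $2n-r-3+2i = a_j + (n-2)$, so $P(r) = \prod_{j=0}^{r-2}(a_j + n-2)/a_j$. The task is then to show $P(r)\geq 2$ whenever $n\geq 4$ is even and $3\leq r\leq n+1$.

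The key structural observation is the symmetry $a_j + a_{r-2-j} = 2(n+1)$. Setting $b_j := a_j - (n+1)$, so that $b_{r-2-j} = -b_j$ and $b_j$ runs through the integers $-(r-2), -(r-4),\ldots, r-2$ in steps of $2$, I would pair the index $j$ with $r-2-j$. A direct computation gives the paired factor
\[\frac{(a_j+n-2)(a_{r-2-j}+n-2)}{a_j\,a_{r-2-j}} = \frac{(2n-1)^2 - b_j^2}{(n+1)^2 - b_j^2}.\]
When $r$ is even, there is exactly one unpaired middle index at $j = (r-2)/2$ (so $b=0$), contributing the factor $(2n-1)/(n+1)$.

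The central estimate is that every pair contributes at least $2$: rearranging shows
\[\frac{(2n-1)^2 - b^2}{(n+1)^2 - b^2} \geq 2 \qquad\Longleftrightarrow\qquad b^2 \geq -2n^2 + 8n + 1.\]
For $n \geq 5$ the right-hand side is negative, hence the inequality is automatic. For the borderline $n=4$ the condition becomes $b^2 \geq 1$, which holds for every nonzero integer $b$, and in particular for every $b_j$ that actually appears in a pair. (The denominator is positive since $|b_j|\leq r-2\leq n-1 < n+1$.)

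To conclude, for odd $r = 2k+1\geq 3$ there are $k\geq 1$ paired factors, each $\geq 2$, so $P(r)\geq 2^k\geq 2$. For even $r = 2k\geq 4$ there are $k-1\geq 1$ paired factors (each $\geq 2$) and the middle factor $(2n-1)/(n+1)\geq 1$, so $P(r)\geq 2^{k-1}\geq 2$. The principal difficulty is the borderline case $n=4$, $r=3$: here the sole pair has $b=\pm 1$ and the paired factor is exactly $48/24 = 2$, so the inequality is actually sharp and the case distinction between $n=4$ and $n\geq 5$ in the central estimate is unavoidable.
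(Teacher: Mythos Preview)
Your proof is correct and takes a genuinely different route from the paper. The paper argues in two stages: first it uses the crude bounds $S(1)\leq 3$ and $S(n/2)\geq (3/2)^{r-1}$ to dispose of $r\geq 6$, and then for $3\leq r\leq 5$ it runs an induction on $n$, showing that $P(n+2)\geq P(n)$ via a polynomial inequality $8n^2+16nr-2r^3+2r^2+2r-2\geq 0$ and checking the base cases $(n,r)=(4,3),(4,4),(4,5)$ by hand. Your argument instead exploits the hidden symmetry $a_j+a_{r-2-j}=2(n+1)$ to pair factors multiplicatively, reducing everything to the single elementary inequality $(2n-1)^2-b^2\geq 2\bigl((n+1)^2-b^2\bigr)$, which is immediate for even $n\geq 6$ and needs only $b^2\geq 1$ at $n=4$. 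This is cleaner and more uniform: it avoids the case split on $r$, the induction, and the auxiliary polynomial computation, and it makes transparent why equality occurs exactly at $(n,r)=(4,3)$. The paper's approach, on the other hand, makes the monotonicity of $P$ in $n$ explicit, which could be of independent use, and its first stage gives a quantitative lower bound $P\geq (3/2)^{r-1}/3$ that your pairing also delivers (as $2^{\lfloor (r-1)/2\rfloor}$) but in a different form.
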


\begin{proof}
Observe first that
$S(1)=\frac{2n+r-1}{2n-r+1}\leq \frac{3n}{n}= 3$ and that
\[S\left(\frac{n}{2}\right)=\prod_{i=2}^{r}\frac{2n-r-1+2i}{n-r-1+2i}\geq\prod_{i=2}^{r}\frac{2n+r-1}{n+r-1}\geq\left(\frac{3}{2}\right)^{r-1}.\]
This implies that, independent of $n$, the claim is true for $r\geq 6$. For the remaining cases, we use induction over $n$. Assume that the claim is true for some $n,r$ which is equivalent to $P(n)\geq 2$ where
\[P(n):=\frac{S\left(\frac{n}{2}\right)}{S(1)}=\prod_{i=2}^{r}\frac{2n-r-3+2i}{n-r-1+2i}.\]
Leaving $r$ fixed, we obtain
\[P(n+2)=\prod_{i=2}^{r}\frac{2n-r-3+2(i+2)}{n-r-1+2(i+1)}=P(n)\frac{f}{g}\] 
where $f=(2n+r+1)(2n+r-1)(n-r+3)$ and $g=(2n-r+1)(2n-r-1)(n+r+1)$. So $P(n+2)\geq P(n)$ is equivalent to $f-g=8n^2+16nr-2r^3+2r^2+2r-2\geq 0$. As $n\geq r-1$, this expression is bounded from below by $-2r^3+26r^2-30r+6$, which is positive for $3\leq r\leq 5$. This implies that if the claim is true for $(n,r)$ with $3\leq r\leq 5$, it is also true for $(n+2,r)$. The lemma now follows by checking that it holds for $(n,r)=(4,3),(4,4),$ and $(4,5)$.
\end{proof}

\begin{thm}\label{unteresymplabsch}
Let $M$ be a c-symplectic finite CW-complex of formal dimension $2n$ with an almost free $T^r$-action where $r\leq n$ if $n$ is odd and $r\leq n-1$ if $n$ is even. Then
\[\dim H^*(M)\geq \min_{\substack{k=0,\ldots,r\\ \gamma\in[0,k]}}\max\left(B_1(k,\gamma),B_2(k,\gamma)\right)\]

where \[B_1(k,\gamma)=2S(1)(k-\gamma)+2S\left(\left\lfloor\frac{n}{2}+1\right\rfloor\right)\gamma+2	^{r-k}\]
and \[B_2(k,\gamma)=4S(1)(k-\gamma)+2^{r-k+1}.\]
\end{thm}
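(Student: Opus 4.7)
The plan is to follow the proof of Theorem \ref{symplabsch} through the construction of
\[p\colon R\otimes(H^*(M)/\overline{\Lambda Z})^{even}\ra R\otimes\overline{Z'}\]
and the decomposition $R\otimes(H^*(M)/\overline{\Lambda Z})^{even}=F\oplus F'$, where $p|_F$ is the first stage of a minimal free resolution of $\coker(p)$, and to deviate only when Poincar\'e duality is invoked. The essential new difficulty in the present range of $r$ is that the Castelnuovo--Mumford regularity bound $N\leq(2n-r-1)/2$ is no longer tight enough to force all generators of $F$ to lie in cdga-degree $\leq n$: summands corresponding to large $d_1$ produce generators in cdga-degree strictly above $n$, which PD cannot duplicate without overcounting.

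To track this I would decompose the Betti diagram of $\coker(p)$ as a positive linear combination $\sum_i a_i\pi(d^i)$ of pure diagrams with $d^i_0=0$ and set
\[\gamma=\sum_{i:\,d^i_1\geq\lfloor n/2+1\rfloor}a_i\,\pi(d^i)_{0,0},\]
so that $k-\gamma$ collects the mass of pure summands whose first syzygies live in cdga-degree $\leq n$. Applying the Herzog--K\"uhl equations as in Proposition \ref{abschatzprop} then yields at least $S(1)(k-\gamma)$ generators of $F\otimes\mathbb{Q}$ in cdga-degree $\leq n$ and at least $S(\lfloor n/2+1\rfloor)\gamma$ generators in degrees $>n$, each of which injects into the corresponding graded piece of $(H^*(M)/\overline{\Lambda Z})^{even}$.

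Two complementary strategies now produce the bounds $B_1$ and $B_2$. For $B_1$, I would add both contributions to $\dim\overline{\Lambda Z}^{even}=2^{r-k-1}$ to bound $\dim H^{even}(M)$ from below, then double via $\chi(M)=0$; this ignores Poincar\'e duality entirely and is the decisive estimate when $\gamma$ is large. For $B_2$, I would use only the low contribution but apply Poincar\'e duality to pair $H^{<n,\,even}(M)$ with $H^{>n,\,even}(M)$, and likewise PD-duplicate the exterior-algebra part (possible because $r-k\leq n-1$ in the even case, so $\overline{\Lambda Z}$ carries no middle-degree term, and trivially in the odd case); a final Euler-doubling yields $B_2$. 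Since neither strategy dominates uniformly in $\gamma$, the pointwise lower bound is $\max(B_1,B_2)$, and minimising over all admissible $(k,\gamma)$ produces the stated inequality.

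The step I expect to require the most care is the $B_2$ argument when $n$ is even, since a summand with $d_1=n/2$ places generators of $F$ at cdga-degree exactly $n$, where PD does not produce new classes. The resolution mirrors the end of the proof of Theorem \ref{symplabsch} and relies on Lemma \ref{rumrechnen}: at $d_1=n/2$ one has the ratio $S(n/2)\geq 2S(1)$, so the middle-degree portion $\alpha_0$ of $k-\gamma$ injects at least $S(n/2)\alpha_0$ classes into $H^{n,\,even}(M)$, exactly compensating for the $2S(1)\alpha_0$ classes lost by PD in the $<n$ part. The small exceptional cases $r\leq 2$, where Lemma \ref{rumrechnen} is inapplicable, follow directly from Corollary \ref{kleineabsch}; the trivial case $k=0$ reduces to Lemma \ref{extalglem} combined with Poincar\'e duality and $\chi(M)=0$.
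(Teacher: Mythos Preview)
Your proposal is correct and follows essentially the same route as the paper's proof: the same decomposition of the Betti diagram of $\coker(p)$ into pure pieces, the same pair of competing estimates (one using only the Euler characteristic, one using Poincar\'e duality below degree $n$), and the same appeal to Lemma \ref{rumrechnen} and Corollary \ref{kleineabsch} to dispose of the middle-degree contribution in the even case and the small-$r$ cases. The only cosmetic difference is that the paper introduces an explicit auxiliary variable $\alpha$ for the mass of pure summands with $d^i_1=n/2$, writes down bounds $C_1(k,\alpha,\gamma)$ and $C_2(k,\alpha,\gamma)$, and then shows via Lemma \ref{rumrechnen} that $\max(C_1,C_2)$ is minimized at $\alpha=0$, whereas you absorb this step directly into your derivation of $B_2$ via the compensation inequality $S(n/2)\geq 2S(1)$.
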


\begin{proof}
Define $k,Z,Z',p,F$ as in the proof of Theorem \ref{symplabsch}. Again, the theorem holds for $k=0$ so assume $k>0$. Let us treat the case when $n$ is even. We decompose the Betti diagram of $\coker(p)$ as a positive linear combination $a_1\pi(d^1)+\ldots+a_l\pi(d^l)$ of pure diagrams. Now define $\alpha$ and $\gamma$ as the sum of those $a_i\pi(d^i)_{0,0}$ for which $d^i_1=\frac{n}{2}$ and $d^i_1\geq\frac{n}{2}+1$. As before, we obtain bounds on $\dim F\otimes\mathbb{Q}$ in certain degrees which translate into the bounds
\begin{align*}
\dim H^{<n,~even}(M) &\geq S(1)(k-\alpha-\gamma)+2^{r-k-1}\\
\dim H^n(M) &\geq S\left( \frac{n}{2} \right)\alpha \\
\dim H^{>n,~even}(M) &\geq S\left(\frac{n}{2}+1\right)\gamma,
\end{align*}
where we have used that the even part of $\overline{\Lambda Z}$ is contained in $H^{<n,~even}(M)$. With the help of the Euler characteristic, we obtain
\[\dim H^*(M)\geq C_1(k,\alpha,\gamma):= 2S(1)(k-\alpha-\gamma)+2S\left( \frac{n}{2} \right)\alpha+2S\left(\frac{n}{2}+1\right)\gamma+2^{r-k}.\]
On the other hand, if we forget about cohomology in degree $>n$ and use Poincar\'e duality first, we get
\[\dim H^*(M)\geq C_2(k,\alpha,\gamma):=4S(1)(k-\alpha-\gamma)+2S\left( \frac{n}{2} \right)\alpha+2^{r-k+1}.\]
This implies $\dim H^*(M)\geq \max\left(C_1(k,\alpha,\gamma),C_2(k,\alpha,\gamma)\right)$. If we can prove that this expression takes its minimum for $\alpha=0$, this proves the claim. For $r\leq 2$, the bounds get dominated by Corollary \ref{kleineabsch} so there is nothing to prove. For $n\geq 4$ and $r\geq 3$, this is a consequence of Lemma \ref{rumrechnen}. Hence, the Theorem is proved in case $n$ is even. If $n$ is odd, the proof is completely analogous except we can assume $\alpha=0$ from the start because $F$ has no elements in degree $\frac{n}{2}$.
\end{proof}

Let us have a look at some values of the lower bounds given by Theorems \ref{symplabsch} and \ref{unteresymplabsch}.\\
\begin{center}
\begin{tabular}{c|cccccccccc}
$r$&1&2&3&4&5&6&7&8&9&10\\
\hline $n=2$&3&6&10&16\\
\hline $n=3$&3&6&12&28&44&64\\
\hline $n=4$&3&7&13&25&40&65&110&214\\
\hline $n=5$&3&6&11&20&33&52&80&123&208&428

\end{tabular}
\end{center}

\begin{rem}\label{bembem} Recall that the TRC is fulfilled for $2n-1\leq r\leq 2n$ by Lemma \ref{extalglem}. Combining this with the values of the above table, it follows that the TRC holds for c-symplectic spaces of formal dimension $8$ or less. It is also interesting to note that, for some small $r$ and $n$, our lower bounds are actually stronger than the TRC.\end{rem}

\bibliography{bibbib}
\bibliographystyle{alpha}
\end{document}